\documentclass[a4,11pt,english]{amsart}

\usepackage[misc]{ifsym}
\usepackage{latexsym,paralist}
\usepackage{amsmath}
\usepackage{amssymb}
\usepackage{amsthm}
\usepackage[mathcal]{eucal}
\usepackage{amsmath, amssymb,enumerate}
\usepackage{mathrsfs}
\usepackage{enumerate}
\usepackage[colorlinks=true, pdfstartview=FitV, linkcolor=blue, citecolor=blue,
 urlcolor=blue]{hyperref} 
\usepackage{color}
\usepackage{comment}
\allowdisplaybreaks

\usepackage[pagewise]{lineno}%\linenumbers

\makeatletter \@addtoreset{equation}{section}

\makeatother \makeatletter

\newtheorem{theorem}{Theorem}
\newtheorem{proposition}{Proposition}
\newtheorem{lemma}{Lemma}
\newtheorem{remark}{Remark}

\newtheorem{coro}{Corollary}

\newcommand{\cc}{C_{c}^{\infty}(\R^{N})}

\newcommand{\R}{\mathbb{R}}

\newcommand{\C}{\mathbb{C}}

\newcommand{\N}{\mathbb{N}}

\newcommand{\Rp}{\textrm{\emph{Re}\,}}

\allowdisplaybreaks
\begin{document}
\title[]{$L^p$-regularity for fourth-order Schr\"odinger operators with unbounded or degenerate coefficients}

\author{Federica Gregorio} 
\address{Dipartimento di Matematica, Università degli Studi di Salerno, Fisciano, Italy}
\email{fgregorio@unisa.it}

\author{Chiara Spina}
\address{Dipartimento di Matematica e Fisica "Ennio De Giorgi", Università del Salento, Lecce, Italy}
\email{chiara.spina@unisalento.it} 

\author{Cristian Tacelli}
\address{Dipartimento di Matematica, Università degli Studi di Salerno, Fisciano, Italy}
\email{ctacelli@unisa.it}

\thanks{The authors are members of the Gruppo Nazionale per l’Analisi Matematica, la Probabilità
e le loro Applicazioni (GNAMPA) of the Istituto Nazionale di Alta Matematica (INdAM)}
%\thanks{$^*$Corresponding author: }

\subjclass{35K35, 35B45, 47D06}
\keywords{Higher order elliptic and parabolic equations, analytic semigroups, a priori estimates, unbounded coefficients, domain characterization}

\begin{abstract}
We prove that  operators of the form $A=-\Delta^{2}-V(x)$, with $|D V(x)|\leq \gamma V(x)^\frac{5}{4}+ C_\gamma$ for some $\gamma$ small enough and $C_\gamma$ positive constant,  generate  analytic semigroups in $L^p(\R^N)$ for $1\leq p<\infty$ and in $C_0(\R^N)$.  Moreover, we characterize the maximal domain of such operators.
\end{abstract}

\maketitle

%\centerline{\scshape
%Federica Gregorio$^{%{\href{mailto:fgregorio@unisa.it}%{\textrm{\Letter}}}
%*1}$, Chiara Spina$^{2}$, Cristian Tacelli$^{1}$}

\medskip

{\footnotesize
% Enter the full affiliation and country name:
% Do not insert commas or periods at the end of lines.
 %\centerline{$^1$Dipartimento di Matematica, Università degli Studi di Salerno, Fisciano, Italy}
} % Do not forget to end {\footnotesize with the sign }

\medskip

{\footnotesize
 % Enter the full affiliation and country name:
 %\centerline{$^2$Dipartimento di Matematica e Fisica "Ennio De Giorgi", Università del Salento, Lecce, Italy}
}

\section{Introduction}

In the literature, numerous results concern second-order elliptic operators with unbounded or degenerate coefficients. In particular, elliptic and parabolic solvability in $L^p$ spaces and domain characterizations have been widely studied. By contrast, higher-order operators with unbounded coefficients remain less investigated. Nevertheless, driven by their physical relevance, there has been growing interest in the higher-order setting in recent years. For instance, results have been obtained for the bi-Ornstein-Uhlenbeck operator  in $L^2_\mu(\mathbb{R}^N)$ with the invariant measure $\mu$ (see \cite{agrt}), as well as for operators with unbounded coefficients in the fourth-order part, such as $(1+\vert{}x\vert{}^2)^\alpha \Delta^2$ for $0 \leq \alpha \leq 2$ in $L^p$ spaces (see \cite{gre-tac,gre-spi-tac1,gre-spi-tac2}). Furthermore, fourth-order Schrödinger operators $\Delta^2 + V$ have been examined for potentials $V$ in suitable higher-order Kato classes or satisfying bounds on their derivatives up to the third order (see \cite{dav-hin98b,gre-spi-tac3}).

\medskip
In this paper we focus on  Schr\"odinger operators  with positive and possibly unbounded potentials $V$. We are mainly interested in finding conditions on the potential  which imply elliptic and parabolic solvability in $L^p$ spaces. Moreover, we aim to describe the domain of the operator precisely.  
Using semigroup theory, we address solvability questions by proving the generation of analytic semigroups.

\medskip
The analogous problem for second-order operators has been completely solved. Indeed, generation results were first established in the pioneering work by Kato (\cite{kato86}), while the characterization of the domain, even for more general operators, was proved in \cite{oka84,met-pru-rha-sch,aus-ben,she,can-ves}. In view of our results, we refer particularly to those papers where the authors proved that, under an oscillation condition on the potential $V$ of the form $\vert{}DV\vert{} \leq \gamma V^{3/2} + C_\gamma$ for sufficiently small $\gamma$, the operator $\Delta - V$ generates an analytic semigroup in $L^p$ spaces or in spaces of continuous functions, with its domain given by the intersection of the domain of the Laplacian and that of the potential viewed as a multiplication operator.

It is important to note that dissipativity plays a crucial role in proving such results in the second-order case. Since dissipativity and other classical second-order properties, such as domination or maximum principles, do not generally hold in the fourth-order setting, the second-order proofs cannot simply be adapted, and a different approach is required. 

\medskip
Under the assumption $\vert{}DV\vert{} \leq \gamma V^{5/4} + C_\gamma$ for sufficiently small $\gamma$, we prove that the operator $-\Delta^2 + V$ generates an analytic semigroup in $L^p$ spaces and in spaces of continuous functions. Moreover, its domain is given by the intersection of the domain of the bi-Laplacian and that of the potential viewed as a multiplication operator. In fact, we can weaken the regularity assumption on $V$ by requiring it to be bounded from above and below by a smooth function satisfying the aforementioned condition.

\medskip
Let us briefly outline the steps toward the main result and the structure of the paper. We begin by recalling some interpolative inequalities and covering results with balls of variable radius. Next, we observe that our assumption on the gradient of $V$ allows us to apply these covering results to balls whose radii are given by a suitable power of the potential. From this, we deduce weighted interpolative inequalities where the weight is a power of the potential. Equipped with these preliminary tools, by using the well-known sectoriality of the bi-Laplacian operator alongside the covering results, we prove a priori estimates in a space of smooth and compactly supported functions. Finally, we extend these estimates to the maximal domain of the operator and characterize it.

In the case $1 < p < \infty$, inspired by classical Agmon's method, as a next step we introduce an auxiliary operator involving an additional variable and deduce a priori estimates for it by arguing as before. Applying these estimates to a suitable function allows us to establish an elliptic resolvent estimate for $A$. Next, by approximation and compactness arguments, we establish elliptic solvability for the operator. Consequently, we obtain the sectoriality property and, via classical generation results, prove that $A$ generates an analytic semigroup in $L^p(\mathbb{R}^N)$ for $1 < p < \infty$.

 The cases $p=1$ and $p=\infty$ need to be treated separately. By using an argument due to Stewart and Masuda, based on Morrey's emmbedding, we prove that $A$ generates an analytic semigroup also in the space $C_0(\R^N)$ of continuous functions vanishing at infinity. The a priori estimates guarantee 
 the domain's description also in this case.
  
  Finally, by using a duality argument, we deduce analogous results also in the $L^1$ setting.

 \medskip
 We point out that our results apply to potentials  behaving like $|x|^r \,(r > 0)$ at infinity, to certain exponential-type potentials, and, in the degenerate case, to potentials like $ |x|^{-s}$ for $s > 4$.

\medskip
 \textbf{Notations.} We use standard notations for function spaces. We denote by $L^p(\R^N)$ and $W^{k,p}(\R^N)$ the standard Lebesgue and Sobolev spaces, respectively. $C_c^\infty(\R^N)$ is the space of test functions, $C_0(\R^N)$ is the space of  continuous functions going to $0$ when $|x|$ goes to infinity and $C_b(\R^N)$ is the space of bounded continuous functions. We denote by $W_{\rm comp}^{4,p}(\R^{N})$   the space of functions in $W^{4,p}(\R^N)$ with compact support.
$B_R$ denotes the open ball of radius $R$ centered at 0, whereas $B_R(x_0)$ is the ball centered at $x_0$ for some $x_0\in\R^N$.  We denote by $B_R^c$ the complementary set of the open ball, $B_R^c=\R^N\setminus B_R$. Moreover, we use the symbol $\|\cdot\|_{p,R}$ to shorten the notation $\|\cdot\|_{L^p(B_R(x_0))}$. In the proofs, $C$ is a  positive constant that may vary from line to line. Finally, for every function $u$ smooth enough we set
\begin{align*}|D^4u|=\left(\sum_{i,j,k,l=1}^N|D_{ijkl}u|^2\right)^\frac12&, \ |D^3u|=\left(\sum_{i,j,k=1}^N|D_{ijk}u|^2\right)^\frac12\\|D^2u|=\left(\sum_{i,j=1}^N|D_{ij}u|^2\right)^\frac12&, \ |Du|=\left(\sum_{i=1}^N|D_{i}u|^2\right)^\frac12.\end{align*}

\section{A priori estimates and domain characterization}
Let us consider operators of the form  $A=-\Delta^2-V(x)$ with $V:\R^N\to\R$ a  measurable function satisfying
\begin{equation} \label{gradientC}\tag{$H$} 
 \frac{1}{c_0}  U\leq V\leq c_0U\text{ with }
U \in C^1(\R^N),\
 U(x)>0\ \forall\,x\in\R^N,\ |D U(x)|\leq \gamma U(x)^\frac{5}{4}+ C_\gamma,
\end{equation}
for some positive and small enough constant $\gamma$ and some constant $C_\gamma \ge 0$,  $c_0>1$ near enough to $1$ as will be specified later.
Observe that, if $C_\gamma=0$, the  last inequality    implies $|D U ^{-\frac{1}{4}}|\leq \frac{\gamma}{4}$. We will actually prove all results under the strongest assumption 
\begin{equation} \label{gradient}\tag{$H'$} 
 \frac{1}{c_0}  U\leq V\leq c_0U\text{ with }
U \in C^1(\R^N),\
 U(x)>0\ \forall\,x\in\R^N,\ |D U(x)|\leq \gamma U(x)^\frac{5}{4},
\end{equation}
for some positive and small enough constant $\gamma$ and   $c_0>1$ near enough to $1$. Once we have generation of an analytic semigroup under assumption \eqref{gradient}, the result will also hold for potentials satisfying \eqref{gradientC} by translation.

\subsection{Some preliminary results}
We show some preliminary interpolative inequalities for the  biharmonic operator.

\begin{lemma}  \label{interp-bila}
Let $u\in C_c^\infty(\R^N)$ and $1\leq p\leq \infty$. Then, for every $\varepsilon>0$ we have
 \begin{align}\label{eq:inter0}
 \|Du\|_p
	&\leq C\|\Delta u\|_p^\frac{1}{2}\|u\|^\frac{1}{2}_p\leq \varepsilon \|\Delta u\|_{p}+\frac{C}{\varepsilon}\|u\|_{p}
\end{align}
and
\begin{align} \label{eq:inter1}
\|D^3u\|_p
	&\leq C\|\Delta^2 u\|_p^\frac{3}{4}\|u\|^\frac{1}{4}_p\leq \varepsilon \|\Delta^2 u\|_{p}+\frac{C}{\varepsilon^{3}}\|u\|_{p}
\end{align}
%\|D^2u\|_p
	%&\leq C\|\Delta^2 u\|_p^\frac{1}{2}\|u\|^\frac{1}{2}_p\leq %\varepsilon^2 \|\Delta^2 u\|_{p}+\frac{C}%{\varepsilon^{2}}\|u\|_{p}\label{eq:inter2}\\
%\|Du\|_p
	%&\leq C\|\Delta^2u\|^\frac{1}{4}_p\|u\|^\frac{3}{4}_p\leq %\varepsilon^3 \|\Delta^2 u\|_{p}+\frac{C}{\varepsilon}\|u\|_{p}. %\label{eq:inter3}
\end{lemma}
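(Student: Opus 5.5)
The plan is to avoid Calder\'on--Zygmund estimates altogether, since they fail for $p=1$ and $p=\infty$. Instead I will represent $u$ through the heat semigroup $e^{t\Delta}$ (for \eqref{eq:inter0}) and the biharmonic semigroup $e^{-t\Delta^2}$ (for \eqref{eq:inter1}). These semigroups have explicit convolution kernels whose derivatives are integrable uniformly in $p\in[1,\infty]$, so every bound below is obtained from Young's convolution inequality.

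\emph{Step 1: smoothing estimates.} The heat kernel is $G_t(x)=t^{-N/2}G(x/\sqrt t)$ with $G$ Gaussian. Hence $\|DG_t\|_{L^1}=C t^{-1/2}$, and so $\|De^{t\Delta}f\|_p\le C t^{-1/2}\|f\|_p$ for all $1\le p\le\infty$. The biharmonic kernel is $K_t(x)=t^{-N/4}g(x/t^{1/4})$, where $g$ is the inverse Fourier transform of $e^{-|\xi|^4}$. Since $e^{-|\xi|^4}$ is a Schwartz function, so is $g$. Scaling then gives $\|D^kK_t\|_{L^1}=C_k t^{-k/4}$, and hence $\|D^3e^{-t\Delta^2}f\|_p\le Ct^{-3/4}\|f\|_p$ for every $p\in[1,\infty]$.

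\emph{Step 2: Duhamel-type representation.} For $u\in C_c^\infty(\R^N)$ and $t>0$ we have the identities
\begin{align*}
u=e^{t\Delta}u-\int_0^t e^{s\Delta}\Delta u\,ds,\qquad u=e^{-t\Delta^2}u+\int_0^t e^{-s\Delta^2}\Delta^2u\,ds.
\end{align*}
Both follow by differentiating in $s$, which is legitimate since $u$ is Schwartz. Applying $D$ to the first identity and $D^3$ to the second, and using Step 1 under the integral, gives
\begin{align*}
\|Du\|_p&\le Ct^{-1/2}\|u\|_p+C\int_0^t s^{-1/2}ds\,\|\Delta u\|_p=C\bigl(t^{-1/2}\|u\|_p+t^{1/2}\|\Delta u\|_p\bigr),\\
\|D^3u\|_p&\le C\bigl(t^{-3/4}\|u\|_p+t^{1/4}\|\Delta^2u\|_p\bigr).
\end{align*}
The key point is that the singularities $s^{-1/2}$ and $s^{-3/4}$ are integrable at $s=0$.

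\emph{Step 3: optimization and Young's inequality.} If $\Delta u=0$ (respectively $\Delta^2u=0$), then $u=0$ by compactness of support together with Liouville's theorem, and the inequalities are trivial. Otherwise we choose $t=\|u\|_p/\|\Delta u\|_p$ in the first bound and $t=\|u\|_p/\|\Delta^2u\|_p$ in the second. This yields the multiplicative forms $\|Du\|_p\le C\|\Delta u\|_p^{1/2}\|u\|_p^{1/2}$ and $\|D^3u\|_p\le C\|\Delta^2u\|_p^{3/4}\|u\|_p^{1/4}$. The additive forms follow from the weighted Young inequality:
\begin{align*}
a^{1/2}b^{1/2}&\le \varepsilon a+\tfrac{1}{4\varepsilon}b,\\
a^{3/4}b^{1/4}&=(\varepsilon a)^{3/4}(\varepsilon^{-3}b)^{1/4}\le \varepsilon a+\varepsilon^{-3}b.
\end{align*}

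\emph{Main obstacle.} The only delicate point is the endpoint cases $p=1,\infty$. There one cannot bound $\|D^2u\|_p$ or $\|D^4u\|_p$ by $\|\Delta u\|_p$ or $\|\Delta^2u\|_p$, so the usual route through full second- and fourth-order derivatives is unavailable. The semigroup approach never needs those derivatives. It uses only the $L^1$-integrability of $D^kK_t$ for $k\le3$, which holds uniformly in $p$. The one fact to verify carefully is that $g$ is a Schwartz function, which is immediate from $e^{-|\xi|^4}\in\mathcal S(\R^N)$.
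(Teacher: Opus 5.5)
Your argument is correct. It runs parallel to the paper's proof, but in the time domain rather than through the resolvent.

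The paper writes $u=(\lambda+\Delta^2)^{-1}f$ with $f=\lambda u+\Delta^2u$. It invokes Tanabe's pointwise bound $|D^3_xK_\lambda(x,y)|\le Ce^{-|\lambda|^{1/4}|x-y|}|x-y|^{1-N}$ on the resolvent kernel and applies Young's inequality, which gives $\|D^3u\|_p\le C|\lambda|^{3/4}\|u\|_p+C|\lambda|^{-1/4}\|\Delta^2u\|_p$. It then chooses $\lambda=\|\Delta^2u\|_p/\|u\|_p$.

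Your Duhamel identity, combined with $\|D^3e^{-s\Delta^2}\|_{p\to p}\le Cs^{-3/4}$, yields the same two-term bound, with $t$ playing the role of $|\lambda|^{-1}$. In fact the paper's resolvent bound is the Laplace transform of yours, since $\int_0^\infty e^{-\lambda s}s^{-3/4}\,ds=\Gamma(\tfrac14)\lambda^{-1/4}$.

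What your version gains is self-containedness and uniformity in the dimension:
\begin{itemize}
\item Your only analytic input is that $\mathcal F^{-1}(e^{-|\xi|^4})$ is a Schwartz function, together with scaling. You do not need the cited pointwise kernel estimate.
\item The paper uses that estimate only for $N>1$ and handles $N=1$ separately by Taylor's formula. Your argument covers $N=1$ with no change.
\item You also write out the proof of \eqref{eq:inter0} explicitly, which the paper only asserts ``follows in a similar way.''
\end{itemize}

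The paper's route is shorter, given the reference. It is also phrased directly in terms of the resolvent, matching the resolvent estimates used later in the paper.
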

\begin{proof}
In the case $N=1$ the inequalities \eqref{eq:inter0} and \eqref{eq:inter1} follow by straightforward computations using Taylor's formula with integral remainder. 
Let now $N>1$ and  $\lambda>0$. We  prove (\ref{eq:inter1}).  Set $\lambda u+\Delta^2u=f$.  By setting $m=4$ in \cite[Theorem 5.7]{tan}, the fundamental solution $K_\lambda(x,y)$ of the operator $\lambda+\Delta^2$ satisfies
$$|D_x^3 K_\lambda (x,y)|\leq Ce^{-|\lambda|^\frac{1}{4}|x-y|}|x-y|^{1-N}.
%|f(y)|dy.
$$
By Young's inequality, set $G_\lambda (z)=e^{-|\lambda|^\frac{1}{4}|z|}|z|^{1-N}$, $z\in\R^N$, it  follows that 
$$\|D^3u\|_p\leq \|G_\lambda\|_1\|f\|_p\leq C|\lambda|^{-\frac{1}{4}}\|f\|_p=|\lambda|^{-\frac{1}{4}}\|\lambda u+\Delta^2 u\|_p\leq C|\lambda|^\frac{3}{4}\|u\|_p+|\lambda|^{-\frac{1}{4}}\|\Delta^2 u\|_p.$$
By choosing $\lambda=\|\Delta^2 u\|_p\|u\|_p^{-1}$ inequality \eqref{eq:inter1} follows. The proof of \eqref{eq:inter0} follows in a similar way.
\end{proof} 

In order to prove some weighted interpolation estimates and to prove a priori estimates for our operator we need a covering result with variable radius balls. The following formulation can be found in  in \cite{cup-for}.
\begin{proposition}\cite[Proposition 6.1]{cup-for} \label{pr:covering}
Let ${\mathcal F} =\{  B_{\rho (x)}(x)\}_{x\in \R^{N}}$, 
where $\rho : \R^{N} \to \R_{+}$ is a Lipschitz continuous function with Lipschitz
constant $\kappa$ strictly less than $\frac 12$.
Then, there exist a countable subcovering $\{ B_{\rho(x_{n})}(x_{n})\}_{n\in \N}$
 and a natural number $\xi=\xi (N,\kappa)$ such that at most $\xi$ 
among the doubled balls $\{ B_{2\rho(x_{n})}(x_{n} )\}_{n\in \N}$ overlap.
\end{proposition}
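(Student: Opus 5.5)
This is a Besicovitch/Vitali-type covering for balls of slowly varying radius. I would prove it in two stages: first extract a disjoint family of shrunken balls by a greedy argument, then control the overlap of the doubled balls using the Lipschitz condition together with a volume count.

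\emph{Step 1: comparability of radii.} Since $\rho$ is $\kappa$-Lipschitz with $\kappa<\frac12$, suppose $B_{2\rho(x)}(x)\cap B_{2\rho(y)}(y)\neq\emptyset$. Then $|x-y|<2\rho(x)+2\rho(y)$. Combining this with $\rho(y)\le\rho(x)+\kappa|x-y|$ gives
\[
\rho(y)\le \frac{1+2\kappa}{1-2\kappa}\,\rho(x)=:M\rho(x),
\]
and the same bound holds with $x$ and $y$ interchanged. So the radii of any two overlapping doubled balls are comparable, with a constant depending only on $\kappa$. This is exactly where $\kappa<\frac12$ is used.

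\emph{Step 2: selection.} Fix a small $\delta\in(0,1)$. I would choose a maximal family of centres $\{x_n\}$ such that the balls $B_{\delta\rho(x_n)}(x_n)$ are pairwise disjoint. Countability follows from disjointness, since each ball contains a rational point. Maximality can be obtained by Zorn's lemma, or more constructively by a greedy choice on the dyadic shells $\{2^{k}\le\rho<2^{k+1}\}$ intersected with large balls.

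For coverage, take any $x$. By maximality, $B_{\delta\rho(x)}(x)$ meets some $B_{\delta\rho(x_n)}(x_n)$, so $|x-x_n|<\delta(\rho(x)+\rho(x_n))$. Using the Lipschitz bound $\rho(x)\le\rho(x_n)+\kappa|x-x_n|$, we get
\[
|x-x_n|\le \frac{\delta(2+\kappa\cdots)}{1-\delta\kappa}\,\rho(x_n),
\]
and this is $<\rho(x_n)$ once $\delta$ is small, for instance $\delta\le\frac14$. Hence $x\in B_{\rho(x_n)}(x_n)$, and the selected balls cover $\R^N$.

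\emph{Step 3: bounded overlap.} Fix a point $z$ and suppose $z$ lies in $B_{2\rho(x_n)}(x_n)$ for all $n$ in an index set $I$. Pick $n_0\in I$ and set $r=\rho(x_{n_0})$. By Step 1, every $n\in I$ satisfies $M^{-1}r\le\rho(x_n)\le Mr$. Therefore $|x_n-x_{n_0}|\le 4Mr$, and the disjoint balls $B_{\delta\rho(x_n)}(x_n)$, $n\in I$, each of radius at least $\delta r/M$, all lie inside $B_{(4M+\delta M)r}(x_{n_0})$. Comparing Lebesgue measures gives
\[
\# I\le\Big(\frac{(4+\delta)M^2}{\delta}\Big)^N=:\xi(N,\kappa).
\]

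The main obstacle is only bookkeeping: $\delta$ must be chosen depending solely on $\kappa$ so that Step 2 closes. The quantity $\rho(x)$ cannot blow up relative to $\rho(x_n)$, because the displacement between the two centres is itself controlled by $\delta\rho(x)$, and $\delta\kappa<1$ prevents this from degenerating.
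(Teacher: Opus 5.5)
Your proof is correct and complete. The paper gives no argument for this statement: it is quoted from Cupini--Fornaro and used as a black box, so your proposal is a self-contained replacement rather than a variant of something in the text.

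Your route is the standard Vitali-type proof. You take a Zorn-maximal family of centres whose shrunken balls $B_{\delta\rho(x_n)}(x_n)$ are pairwise disjoint. You show that $M^{-1}\rho(x)\le\rho(y)\le M\rho(x)$, with $M=\frac{1+2\kappa}{1-2\kappa}$, whenever the doubled balls meet. Then you count volumes. What this buys over the citation is an explicit multiplicity $\xi=\bigl((4+\delta)M^2/\delta\bigr)^N$ and a clear view of where $\kappa<\frac12$ enters.

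That hypothesis enters only in your Step 1, and it cannot be dropped. For $\rho(x)=\frac12|x|+1$, every doubled ball $B_{2\rho(x)}(x)$ contains the origin, while any subcovering must be infinite because each ball is bounded.

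Two bookkeeping fixes:
\begin{itemize}
\item In Step 2 the bound is $|x-x_n|<\frac{2\delta}{1-\delta\kappa}\,\rho(x_n)$, with no extra $\kappa$ term in the numerator. For $\delta=\frac14$ and $\kappa<\frac12$ this is at most $\frac47\rho(x_n)$.
\item You should fix $\delta=\frac14$ once and for all, so that $\xi$ visibly depends only on $N$ and $\kappa$.
\end{itemize}

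Finally, the estimate $|x_n-x_{n_0}|<2\rho(x_n)+2r\le 4Mr$ holds as soon as the two doubled balls merely intersect. So the same count shows that each doubled ball meets at most $\xi$ others, and your conclusion holds under either reading of ``at most $\xi$ overlap''.
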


Observe that by the inequality in (\ref{gradient}) satisfied by  $U(x)$, the function $\rho(x)=\frac{2\eta}{\gamma }U(x)^{-\frac{1}{4}}$, $0<\eta\leq 1$, satisfies the assumptions of the previous covering proposition with Lipschitz constant smaller than $\frac{\eta}{2}$. 

The next lemma constitutes a first step toward the a priori estimates.

\begin{lemma}  \label{interpolation}
Let  $1\leq p\leq \infty$ and assume that $V$ satisfies (\ref{gradient}). Then there exists a positive constant  $C=C(N,p,c)$ such that for every $\varepsilon>0$ the following inequalities hold
\begin{align}
&\|V^{\frac{3}{4}}Du\|_{p}\leq \varepsilon ^{3}\|\Delta^2u\|_{p}+\frac{C}{\varepsilon}\|Vu\|_{p} \label{eq:interp00-01}\\
&\|V^\frac{1}{2}D^{2}u\|_{p}\leq \varepsilon^2 \|\Delta^2 u\|_{p}+\frac{C}{\varepsilon^2}\|Vu\|_{p}\label{eq:interp00-02} \\
&\|V^{\frac{1}{4}}D^{3}u\|_{p}\leq \varepsilon \|\Delta^2 u\|_{p}+\frac{C}{\varepsilon^{3}}\|Vu\|_{p} \label{eq:interp00-03}
\end{align}
for every $u\in C_{c}^{\infty}(\R^{N})$. 
\end{lemma}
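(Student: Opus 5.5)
We localize on the balls of the covering given by Proposition \ref{pr:covering} and apply the unweighted interpolation inequalities of Lemma \ref{interp-bila} on each ball, with the small parameter tied to the local value of $U$. By the comparability $c_0^{-1}U\le V\le c_0U$, it suffices to prove the estimates with $U$ in place of $V$ in the weights and in $\|Vu\|_p$, at the cost of constants depending on $c_0$. Set $\rho(x)=\frac{2\eta}{\gamma}U(x)^{-1/4}$ with $\eta$ fixed (say $\eta=1/2$). Proposition \ref{pr:covering} gives balls $B_n=B_{\rho(x_n)}(x_n)$ covering $\R^N$ such that the doubled balls $2B_n$ overlap at most $\xi$ times. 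The key observation is that $U$ is comparable to $U(x_n)$ on $2B_n$. Indeed, $|DU^{-1/4}|\le\gamma/4$, so for $x\in 2B_n$ we have
\[
|U(x)^{-1/4}-U(x_n)^{-1/4}|\le \tfrac{\gamma}{4}\,2\rho(x_n)=\eta\, U(x_n)^{-1/4},
\]
and hence $(1+\eta)^{-4}U(x_n)\le U(x)\le(1-\eta)^{-4}U(x_n)$.

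Next I establish the local estimates. Take a cutoff $\vartheta_n\in C_c^\infty(2B_n)$ with $\vartheta_n=1$ on $B_n$ and $|D^k\vartheta_n|\le C\rho_n^{-k}$, where $\rho_n=\rho(x_n)\sim U(x_n)^{-1/4}$. Write $U_n=U(x_n)$ and apply the scaled Lemma \ref{interp-bila} (and its analogues for $D$ and $D^2$ in terms of $\Delta^2$, obtained the same way from the kernel bounds) to $\vartheta_n u$ with parameter $\delta$. This gives
\[
\|D^3u\|_{L^p(B_n)}\le \delta\|\Delta^2(\vartheta_nu)\|_p+C\delta^{-3}\|\vartheta_nu\|_p .
\]
Expanding $\Delta^2(\vartheta_nu)$ produces lower-order terms $\rho_n^{-k}\|D^{4-k}u\|_{L^p(2B_n)}$. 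I then multiply by $U_n^{1/4}$ and choose $\delta=\varepsilon U_n^{-1/4}$. The leading term becomes $\varepsilon\|\Delta^2u\|_{L^p(2B_n)}$ and the zero-order term becomes $C\varepsilon^{-3}U_n\|u\|_{L^p(2B_n)}\sim C\varepsilon^{-3}\|Uu\|_{L^p(2B_n)}$. The commutator terms carry the factors $\varepsilon U_n^{-1/4}\rho_n^{-k}\sim \varepsilon\, U_n^{(k-1)/4}$, so they take the form $\varepsilon\|U^{(k-1)/4}D^{4-k}u\|_{L^p(2B_n)}$. These are exactly the three weighted quantities in the statement (weights $U^{1/4}D^3$, $U^{1/2}D^2$, $U^{3/4}D$), plus $\varepsilon\|Uu\|$ when $k=4$. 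The same computation with $\delta=\varepsilon^2U_n^{-1/2}$ for $D^2$ and $\delta=\varepsilon^3U_n^{-3/4}$ for $D$ gives the analogous local inequalities.

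To globalize, I raise to the power $p$ and sum over $n$, using the bounded overlap $\xi$; for $p=\infty$ I take suprema instead. This yields, with $X_1=\|U^{3/4}Du\|_p$, $X_2=\|U^{1/2}D^2u\|_p$, $X_3=\|U^{1/4}D^3u\|_p$, the system
\[
X_3\le \varepsilon\|\Delta^2u\|_p+C\varepsilon^{-3}\|Uu\|_p+C\varepsilon(X_3+X_2+X_1)+C\varepsilon\|Uu\|_p,
\]
and similarly for $X_2$ and $X_1$ with the appropriate powers of $\varepsilon$. The main obstacle is closing this coupled system, since every estimate feeds back all the weighted derivative terms. I would handle it by first proving the three inequalities for a fixed small $\varepsilon_0$: the coefficients $C\varepsilon_0$ in front of $X_i$ can then be absorbed, and since $u\in C_c^\infty$ all the $X_i$ are finite. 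This gives $X_1+X_2+X_3\le C(\|\Delta^2u\|_p+\|Uu\|_p)$. Substituting this bound back into the right-hand sides, the remaining terms are of the right form for all $\varepsilon\le\varepsilon_0$. For large $\varepsilon$ the inequalities follow directly from the case $\varepsilon=\varepsilon_0$, or equivalently from the multiplicative form $X_3\le C\|\Delta^2u\|^{3/4}\|Uu\|^{1/4}$ obtained by optimizing in $\varepsilon$. Finally, I replace $U$ by $V$ using $c_0$.
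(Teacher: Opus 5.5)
\textbf{What matches and where the routes differ.} Up to the last step your argument is sound, and it takes a genuinely different route from the paper. The paper first proves, by localization, the first-order weighted inequality \eqref{eq:interp0}, $\|V^{h/4}Dv\|_p\le C\|V^{(h-1)/4}\Delta v\|_p^{1/2}\|V^{(h+1)/4}v\|_p^{1/2}$. It then gets the $D$ and $D^2$ bounds by applying this to $v$, $D_jv$, $\Delta v$ and multiplying the results together, and it localizes \eqref{eq:inter1} directly only for $D^3$.

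You instead localize all three fourth-order inequalities at once; the unweighted bounds for $D$ and $D^2$ in terms of $\Delta^2$ follow by combining \eqref{eq:inter0} and \eqref{eq:inter1}. You then close the linear system in $X_1,X_2,X_3$ by absorption at a fixed $\varepsilon_0$ and back-substitution. Your route never passes through a weighted multiplicative inequality, and that is exactly where the paper's argument breaks (see below).

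There are two cosmetic slips. First, after multiplying by $U_n^{1/4}$ the commutator factor is $\varepsilon\rho_n^{-k}\sim\varepsilon\gamma^kU_n^{k/4}$. So the terms are $\varepsilon\|U^{k/4}D^{4-k}u\|_{L^p(2B_n)}$, not $U^{(k-1)/4}$; your list of weights is the correct one. Second, after summing over the covering a constant multiplies $\varepsilon\|\Delta^2u\|_p$, which you remove by rescaling $\varepsilon$.

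\textbf{The gap: the range $\varepsilon>\varepsilon_0$.} The case $\varepsilon=\varepsilon_0$ does not give the larger values, because the coefficient $C\varepsilon^{-3}$ of $\|Vu\|_p$ decreases as $\varepsilon$ grows. Optimizing over $\varepsilon\le\varepsilon_0$ yields $X_3\le C\|\Delta^2u\|_p^{3/4}\|Vu\|_p^{1/4}$ only when the optimal $\varepsilon\sim(\|Vu\|_p/\|\Delta^2u\|_p)^{1/4}$ is at most $\varepsilon_0$. So appealing to the multiplicative form is circular.

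In fact nothing can close this step, because the inequalities are false for large $\varepsilon$. Take $p=\infty$ and $V=U=(1+\frac{\gamma}{4}\sqrt{1+|x|^2})^{-4}$; this satisfies \eqref{gradient} since $|DU^{-1/4}|\le\frac{\gamma}{4}$. Let $u_T(x)=x_1^3\chi(|x|/T)$, where $\chi\in C^\infty([0,\infty))$ equals $1$ on $[0,1]$ and $0$ on $[2,\infty)$. Then:
\begin{itemize}
\item $\|V^{1/4}D^3u_T\|_\infty\ge 6(1+\frac{\gamma}{4})^{-1}$, by evaluating at $x=0$;
\item $\|\Delta^2u_T\|_\infty\le C/T$, because $\Delta^2(x_1^3)=0$ and only cutoff terms remain;
\item $\|Vu_T\|_\infty\le\sup_{t>0}t^3(1+\frac{\gamma t}{4})^{-4}\le 7\gamma^{-3}$, uniformly in $T$.
\end{itemize}
So whatever constant is used in \eqref{eq:interp00-03}, choosing first $\varepsilon$ large and then $T$ large violates it. The functions $x_1\chi(|x|/T)$ and $x_1^2\chi(|x|/T)$ do the same for \eqref{eq:interp00-01} and \eqref{eq:interp00-02}.

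The paper's proof has the same defect. The absorption with $\varepsilon'=\frac{C\varepsilon}{1-C\varepsilon}$ needs $C\varepsilon<1$, and it leaves a zero-order coefficient that grows linearly in $\varepsilon'$ as $\varepsilon'\to\infty$. So it yields only bounded $\varepsilon'$, and the multiplicative inequality \eqref{eq:interp0} drawn from it fails for $h=1$, $v=x_1\chi(|x|/T)$ and the same $V$.

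Only bounded $\varepsilon$ is used afterwards (e.g.\ $\varepsilon=\gamma^{1/6}$ with $\gamma$ small in Proposition \ref{domain}). The lemma should therefore be stated for $0<\varepsilon\le\varepsilon_0$, which is precisely what your argument proves. Drop the large-$\varepsilon$ sentence.
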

\begin{proof}  
First  we prove that for $h=1,2,3$ and for $v\in C_{c}^{\infty}(\R^{N})$
\begin{equation}\label{eq:interp0}
\|V^{\frac{h}{4}}D v\|_{p}\leq C \|V^{\frac{h-1}{4}}\Delta v\|^{\frac 12}_{p}\|V^{\frac{h+1}{4}}v\|^{\frac 12}_{p}.
\end{equation}
%from which \eqref{eq:interp1} easily follows replacing $v$ by     %$D^{3-h}u$.
Let us set $\rho(x)=\frac{1}{\gamma}U(x)^{-\frac{1}{4}}$ and note that, as stated before,  by \eqref{gradient} it follows that $\rho(x)$ satisfies the assumptions of Proposition \ref{pr:covering} with Lipschitz constant smaller than $\frac{1}{4}$. We fix  $x_{0}\in \R^{N}$ and set $R=\rho (x_{0})=\frac{1}{\gamma}U(x_0)^{-\frac{1}{4}}$. 
Then, if $x\in B_{2R}(x_0)$, 
since $|DU^{-\frac{1}{4} }|\leq \frac{\gamma}{4}$
we have $\left |U^{-\frac{1}{4} }(x)-U^{-\frac{1}{4} }(x_0)\right|\leq 2R\frac{\gamma}{4}=\frac{1}{2}U^{-\frac{1}{4} }(x_0)$ and then 
$\frac{1}{2}U^{-\frac{1}{4} }(x_0)\leq U^{-\frac{1}{4} }(x)\leq \frac{3}{2}U^{-\frac{1}{4} }(x_0)$, taking into account the hypothesis on $V$, the following inequalities hold
 \[
 c_1V(x_{0})^{\frac{1}{4}}\leq V(x)^{\frac{1}{4}}\leq c_2V(x_{0})^{\frac{1}{4}}
\] where $c_1=\frac {2}{3}c_0^{-\frac 12}$ and $c_2= 2 c^{\frac 12}_0$ and consequently 
\[
 c_1^{ k}V(x_{0})^{\frac{k}{4}}\leq V(x)^{\frac{k}{4}}\leq c_2^{k}V(x_{0})^{\frac{k}{4}}
\]
for $k=h-1,h,h+1$. 
Let now  $\vartheta\in C_c^{\infty }(\R^N)$ be such that $0\leq \vartheta \leq 1$,
$\vartheta(x)=1$ for $x\in B_{R}(x_{0}),\vartheta(x)=0$ for $x\in B^{c}_{2R}(x_{0})$ and $|D^h\vartheta|\leq {C}{R^{-h}}$ for $h=1,2,3$.
By Lemma \ref{interp-bila}, we have
\begin{align*}
\|V^{\frac{h}{4}}D v\|_{p, R}
&\leq C\|V(x_{0})^{\frac{h}{4}}D v\|_{p, R}\leq  C\|V(x_{0})^{\frac{h}{4}}D (\vartheta v)\|_{p}\\
& \leq C  \| V(x_{0})^{\frac{h-1}{4}} \Delta (\vartheta v)\|_{p}^{\frac{1}{2}}  \| V(x_{0})^{\frac{h+1}{4}} \vartheta v\|^{\frac{1}{2}}_{p}\\
& \leq \varepsilon  \|V(x_{0})^{\frac{h-1}{4}} \Delta (\vartheta v)\|_{p}
		+\frac{C}{\varepsilon } \| V(x_{0})^{\frac{h+1}{4}} \vartheta v\|_{p}\\
&  \leq \varepsilon  \|V(x_{0})^{\frac{h-1}{4}} \Delta v\|_{p,2R}+\frac{C}{R}\varepsilon \|V(x_{0})^{\frac{h-1}{4}} D v\|_{p,2R}
		+\frac{C}{R^{2}}\varepsilon \|V(x_{0})^{\frac{h-1}{4}} v\|_{p,2R}\\
&\quad		+\frac{C}{\varepsilon}\| V(x_{0})^{\frac{h+1}{4}} v\|_{p,2R}.
\end{align*}
Then,
taking into account that 
$\frac{1}{R}V(x_{0})^{\frac{h-1}{4}}=c V(x_{0})^{\frac{h}{4}}\leq CV(x)^{\frac{h}{4}}$
and $\frac{1}{R^{2}}V(x_{0})^{\frac{h-1}{4}}=c V(x_{0})^{\frac{h+1}{4}}\leq CV(x)^{\frac{h+1}{4}}$ in $B_{2R}(x_0)$ one has
\begin{align}\label{eq:cover-x00}
 \|V^{\frac{h}{4}}D v\|_{p,R} \leq C \left( \varepsilon \|V^{\frac{h-1}{4}}\Delta v\|_{p,2R}+\varepsilon \|V^{\frac{h}{4}}D v\|_{p,2R}
		+\left(\varepsilon+\frac{1}{\varepsilon}\right)\|V^{\frac{h+1}{4}}v\|_{p,2R}
		\right).
\end{align}
Assume first $1\leq p<\infty$. Let $\{B_{\rho(x_n )}(x_n)\}$ be a countable covering of $\R^N$ as in Proposition \ref{pr:covering}
such that at most $\zeta $ among the double balls $\{B_{2\rho(x_n )}(x_n)\}$ overlap.
We write \eqref{eq:cover-x00} with $x_0$ replaced by $x_n$ and sum over $n$.
Taking into account the above covering result, we get
$$
\|V^{\frac{h}{4}}D v\|_{p}\leq 
C \left( \varepsilon \|V^{\frac{h-1}{4}}\Delta v\|_{p}+\varepsilon \|V^{\frac{h}{4}}D v\|_{p}
		+\left(1+\frac{1}{\varepsilon}\right)\|V^{\frac{h+1}{4}}v\|_{p}\nonumber
		\right).
$$
When $p=\infty$, the proof follows immediately without using the covering result.
Choosing $\varepsilon'=\dfrac{C\varepsilon}{1-C\varepsilon}$   one can find a suitable positive $C$ such that  
$$
\|V^{\frac{h}{4}}D v\|_{p}\leq 
\varepsilon' \|V^{\frac{h-1}{4}}\Delta v\|_{p}
+\dfrac{C}{\varepsilon'}\|V^{\frac{h+1}{4}}v\|_{p}\nonumber
$$
from which 
\eqref{eq:interp0} follows.

Applying \eqref{eq:interp0} to $v$, $D_{j}v$ and $\Delta v$   we can write
\begin{align}
    \|V^\frac{3}{4}D_iv\|_p&\leq C\|V^\frac{1}{2}\Delta v\|_p^\frac{1}{2}\|Vv\|_p^\frac{1}{2}
            \label{eq:interp01-01}\\
    \|V^\frac{1}{2}D_{ij}v\|_p&\leq C\|V^\frac{1}{4}\Delta D_jv\|_p^\frac{1}{2}\|V^\frac{3}{4}D_jv\|_p^\frac{1}{2}
            \label{eq:interp01-02}\\
    \|V^\frac{1}{4}D_i\Delta v\|_p&\leq C\|\Delta^2 v\|_p^\frac{1}{2}\|V^\frac{1}{2}\Delta v\|_p^\frac{1}{2}.
            \label{eq:interp01-03}
\end{align}

Combining \eqref{eq:interp01-02} with \eqref{eq:interp01-01} and \eqref{eq:interp01-03} we obtain 
$\|V^\frac{1}{2}D_{ij}v\|_p\leq C 
    \|\Delta^2 v\|_p^\frac{1}{4}
    \|V^\frac{1}{2}\Delta v\|_p^\frac{1}{2}
    \|Vu\|_p^\frac{1}{4} $ 
from which we have
\begin{equation}\label{eq:interp01-04} 
\|V^\frac{1}{2}D^2v\|_p\leq C 
    \|\Delta^2 v\|_p^\frac{1}{2}
    \|Vu\|_p^\frac{1}{2}.     
\end{equation}
Then, \eqref{eq:interp00-02} follows.

Combining \eqref{eq:interp01-01} with \eqref{eq:interp01-04} we obtain
\[
\|V^\frac{3}{4}D_iv\|_p\leq C 
    \|\Delta ^2v\|_p^\frac{1}{4}
    \|Vv\|_p^\frac{3}{4}
\]
from which, by Young's inequality, \eqref{eq:interp00-01} follows.

Now, by arguing in similar way and starting from the interpolation inequality
\eqref{eq:inter1}, we obtain 
\eqref{eq:interp00-03}.
\end{proof}

We introduce the following spaces
\begin{align*}
D(A_1)&=\{u\in L^1(\R^N):\ Vu,\   \Delta^2 u\in L^1(\R^N)\}\\
D(A_p)&=\{u\in W^{4,p}(\R^N):\ Vu,\   D^4 u\in L^p(\R^N)\},\quad 1<p<\infty\\
D(A_{\infty})&=\{u\in C_0(\R^N)\cap W^{4,q}_{\rm loc}(\R^N)\ \forall\ q<\infty:\ Vu,\   \Delta^2 u\in C_0(\R^N)\}.
\end{align*}
We endow these spaces with the norms
\begin{align*}
\|u\|_{D(A_1)}& =\|u\|_{1}+\|V u\|_{1}+ \|\Delta^2  u\|_{1} \\
\|u\|_{D(A_p)}& =\|u\|_{W^{4,p}}+\|V u\|_p,\quad 1<p<\infty\\
\|u\|_{D(A_{\infty})}& =\|u\|_{\infty}+\|V u\|_{\infty}+ \|\Delta^2 u\|_{\infty}.
\end{align*}

The following density result will allow us to work with smooth functions.

\begin{lemma}
Let  $1\leq p\leq \infty$ and assume that $V$ satisfies (\ref{gradient}). Then $C_c^\infty(\R^N)$ is dense in $D(A_1)$, $D(A_p)$  and $D(A_{\infty})$ with respect to the norms introduced above.
\end{lemma}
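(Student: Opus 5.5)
We argue by truncation followed by mollification. Take $\phi\in C_c^\infty(\R^N)$ with $0\le\phi\le1$, $\phi=1$ on $B_1$, $\phi=0$ outside $B_2$, and set $\phi_n(x)=\phi(x/n)$. Given $u$ in $D(A_1)$, $D(A_p)$ or $D(A_\infty)$, we first show that $u_n=\phi_n u$ converges to $u$ in the graph norm, and then regularize each $u_n$.

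\textbf{Truncation.} Clearly $u_n\to u$ and $Vu_n=\phi_nVu\to Vu$ in $L^p$ (for $p<\infty$ by dominated convergence; for $p=\infty$ because $u,Vu\in C_0(\R^N)$, so the tails are uniformly small). For the bi-Laplacian we expand
\[\Delta^2(\phi_nu)=\phi_n\Delta^2u+\sum_{1\le|\alpha|\le 4}c_\alpha\, n^{-|\alpha|}(D^\alpha\phi)(x/n)\, D^{4-|\alpha|}u ,\]
with the obvious contraction of indices. The first term converges to $\Delta^2 u$. Each remaining term is bounded by $Cn^{-k}\|D^{4-k}u\|_{L^p(B_{2n}\setminus B_n)}$, $k=1,\dots,4$, so it is enough to know that $D^ju\in L^p$ for $j\le3$, or that the lower-order derivatives are at least controlled on annuli.

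For $1<p<\infty$ this is immediate, since $u\in W^{4,p}$. For $p=1,\infty$ the definition only gives $u,\Delta^2u\in L^p$, and the $W^{4,q}_{\rm loc}$ information is not global. Here we use Lemma \ref{interp-bila}: the interpolation inequalities $\|Du\|_p,\|D^2u\|_p,\|D^3u\|_p\le C(\|\Delta^2u\|_p+\|u\|_p)$ hold for test functions, and we extend them to $u$ with $u,\Delta^2u\in L^p$ by mollification. If $u_\varepsilon=u*\varrho_\varepsilon$, then $u_\varepsilon$ is smooth with all derivatives bounded in $L^p$, and a cutoff argument with fixed $\varepsilon$ justifies the inequality for $u_\varepsilon$. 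Indeed, applying the lemma to $\phi_nu_\varepsilon$ produces error terms of the form $n^{-k}\|D^{4-k}u_\varepsilon\|_p$, which tend to $0$. We obtain $\|D^ju_\varepsilon\|_p\le C(\|\Delta^2u\|_p+\|u\|_p)$ uniformly in $\varepsilon$. Letting $\varepsilon\to0$ gives $D^ju\in L^p$ for $p=1$ as a finite measure, so the lower-order derivatives are globally bounded in $L^1$; for $p=\infty$ this follows by weak-$*$ compactness. Either way the commutator terms tend to zero. In the $C_0$ case we also need $\Delta^2u_n\in C_0$, which is clear because $u_n$ has compact support.

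\textbf{Mollification.} Now $u$ has support in a fixed ball $B_{2n}$. Set $u_\varepsilon=\varrho_\varepsilon*u\in C_c^\infty$. Then $\Delta^2u_\varepsilon=\varrho_\varepsilon*\Delta^2u\to\Delta^2u$ in $L^p$ for $p<\infty$, and uniformly when $\Delta^2u\in C_0$. On the compact set $B_{2n+1}$ the functions $U$, and hence $V$ by (\ref{gradient}), are bounded above and below by positive constants, since $U$ is continuous and positive. Therefore $\|V(u_\varepsilon-u)\|_p\le C\|u_\varepsilon-u\|_p\to0$. For $p=\infty$ the function $u$ is continuous and compactly supported, so convergence is uniform. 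For $1<p<\infty$, convergence in $W^{4,p}$ is standard.

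\textbf{Main obstacle.} The delicate point is the control of the lower-order terms in the truncation step for $p=1,\infty$, where $W^{4,p}$-regularity is not part of the definition of the domain. This is exactly where Lemma \ref{interp-bila} and the approximation argument above are needed. A simpler route would be to note that on $B_{2n}\setminus B_n$ we can use local elliptic estimates with constants independent of $n$ for the unit-scale pieces; the factors $n^{-k}$ then give decay.
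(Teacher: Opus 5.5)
Your strategy (truncate with $\phi_n$, then mollify the compactly supported function) is the same as the paper's. You also correctly spot what the paper's ``the same argument works for $p=1$ and $p=\infty$'' passes over: for these exponents the commutator terms need global control of $D^ju$, $j\le 3$, and the definitions of $D(A_1)$ and $D(A_\infty)$ do not provide it.

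However, your argument for this control breaks down at $p=1$. The uniform bound $\|D^ju_\varepsilon\|_1\le C(\|\Delta^2u\|_1+\|u\|_1)$ plus weak-$*$ compactness only shows that $D^ju$ is a finite Radon measure. A finite measure need not be an $L^1$ function, so ``globally bounded in $L^1$'' does not follow. This matters for the rest of the proof. If $D^3u$ were merely a measure, the term $n^{-1}(D\phi)(x/n)D^3u$ in $\Delta^2(\phi_nu)$ would not be known to lie in $L^1$, so you could not assert $u_n\in D(A_1)$. In the mollification step, $\varrho_\varepsilon*\Delta^2u_n$ would then converge to $\Delta^2u_n$ only weakly-$*$, not in the $L^1$ norm.

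The repair uses your own tools. Apply the inequality, which your cutoff argument justifies for smooth functions with all derivatives in $L^1$, to the differences $u_\varepsilon-u_\delta$. Since $\Delta^2u_\varepsilon=\varrho_\varepsilon*\Delta^2u\to\Delta^2u$ and $u_\varepsilon\to u$ in $L^1$, you get $\|D^j(u_\varepsilon-u_\delta)\|_1\to0$. Hence $(D^ju_\varepsilon)$ is Cauchy in $L^1$, and its limit is the distributional derivative $D^ju$, which therefore lies in $L^1$.

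Alternatively, for $\lambda>0$, Fourier uniqueness shows that $u$ is the convolution of $\lambda u+\Delta^2u\in L^1$ with the fundamental solution of $\lambda+\Delta^2$. Its third derivatives are integrable by the kernel bound used in the proof of Lemma \ref{interp-bila}. This is exactly the embedding $D(A_1)\hookrightarrow W^{3,p}(\R^N)$, $1\le p<\frac{N}{N-1}$, that the paper quotes from Tanabe at the start of the $L^1$ section.

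For $p=\infty$ your weak-$*$ argument is fine. Just note that $\Delta^2u_n\in C_0(\R^N)$ requires continuity of $D^ju$, $j\le 3$. This comes from $u\in W^{4,q}_{\rm loc}(\R^N)$ with $q>N$, not from the compact support of $u_n$ alone.
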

\begin{proof} 
Consider the case $1<p<\infty$. We first observe that if we take $u\in W^{4,p}(\R^N)$ with compact support, then we can approximate it in the norm of $D(A_p)$ with smooth functions by convolution with standard mollifiers. Therefore it remains to prove that a function in $D(A_p)$ can be approximated by a sequence of functions with compact support.
Let $\varphi\in C^\infty([0,\infty))$ such that $0\leq\varphi\leq1, \varphi(t)=1$ for $t\in[0,1]$ and $\varphi(t)=0$ for $t\geq2$ and then set 
\[\varphi_n(x)=\begin{cases} 1 & \textrm{in}\ B_n,\\\varphi\left(\frac{|x|}{n}\right) & \textrm{in}\ B_n^c.\end{cases}\]
 Observe that $\varphi_n\in C^\infty_c(\R^N)$ and 
\[\begin{cases} \varphi_n=0 \quad \textrm{in}\ B_{2n}^c,\\ \varphi_n=1\quad \textrm{in}\ B_n,\\0\leq\varphi_n\leq1,\\|D^h\varphi_n|\leq Cn^{-h}\quad \textrm{for}\ h=1,2,3,4.
\end{cases}\]
Let $u\in D(A_p)$ and set $u_n=u\varphi_n$.
It is easy to  show   that $u_n$ converges to $u$ with respect to the norm $||\cdot||_{D(A_p)}$. The same argument works for $p=1$ and $p=\infty$.
% One has $u_n\to u\in L^p(\R^N)$ by dominated convergence. For the first order term we observe that
% \begin{align*}
% |D u_n-D u|&= |D u\, \varphi_n+uD\varphi_n-D u|
% \\&\leq  (1-\varphi_n)a^\frac12|D u|+  a^\frac12 |u||D\varphi_n| 
% \end{align*}
% and the right-hand side of the inequality converges to 0 a.e. Moreover,  in view of \eqref{derphi},
% \[(1-\varphi_n)a^\frac12|D u|+ a^\frac12 |u||D\varphi_n|\leq  (1-\varphi_n)a^\frac12|D u|+C|u|\chi_{B_{2n}\setminus B_n}\in L^p(\R^N).\]
% Therefore $a^\frac12D u_n$ converges to $a^\frac12D u$  in $L^p(\R^N)$.
% As regards the other terms, for $h=2,3,4$, one has 
% \begin{align*}
% |a^{\frac{h}{2}} D^h u_n-a^{\frac{h}{2}} D^h u|&\leq  (1-\varphi_n)a^{\frac{h}{2}} |D^h u|+C \sum_{i=0}^{h-1}\sum_{\substack{|l|=i\\|m|=h-i}}  a^{\frac{h}{2}}|D^lu||D^m\varphi_n|\to0 \ \textrm{a.e.}
% \end{align*}
% and by \eqref{derphi}
% \begin{align*}(1-\varphi_n)a^{\frac{h}{2}} |D^h u|+C& \sum_{i=0}^{h-1}\sum_{|l|=i,|m|=h-i}  a^{\frac{h}{2}}|D^lu||D^m\varphi_n|\leq\\ &\qquad(1-\varphi_n)a^{\frac{h}{2}} |D^h u|+C\sum_{i=0}^{h-1}\sum_{\substack{|l|=i\\|m|=h-i}}  a^{\frac{h-m}{2}}|D^lu|\chi_{B_{2n}\setminus B_n}\in L^p(\R^N).\end{align*}
\end{proof}

\subsection{A priori estimates}

The following proposition is a  consequence of the previous lemmas and gives the announced a priori estimates.

\begin{proposition} \label{domain}
Let  $1\leq p\leq \infty$ and assume that $V$ satisfies (\ref{gradient}). Then, for $u\in D(A_p)$ the following inequality holds
$$\|\Delta^2  u\|_p+\|V u\|_p\leq C\|Au\|_p$$
where the  constant $C$ depends on $N,\ p$. 
\end{proposition}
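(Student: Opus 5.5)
By the density lemma, it suffices to prove the estimate for $u\in C_c^\infty(\R^N)$; the general case then follows by approximation in the graph norm. Since $\|\Delta^2u\|_p\le \|Au\|_p+\|Vu\|_p$, the whole matter reduces to proving
\[
\|Vu\|_p\le C\|Au\|_p .
\]
I would argue locally, freezing the potential on balls of variable radius $\rho(x)=\frac{2\eta}{\gamma}U(x)^{-1/4}$, with $\eta$ small to be fixed later.

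Fix $x_0$, set $R=\rho(x_0)$, and take the cutoff $\vartheta$ from the proof of Lemma \ref{interpolation}, so that $\vartheta=1$ on $B_R(x_0)$, $\vartheta=0$ outside $B_{2R}(x_0)$, and $|D^h\vartheta|\le CR^{-h}$. Write $A_0=-\Delta^2-V(x_0)$. This is a constant-coefficient operator with $V(x_0)>0$, so by the sectoriality of $\Delta^2$ (its resolvent at the positive point $V(x_0)$) we have
\[
V(x_0)\|v\|_p+\|\Delta^2v\|_p\le C\|(\Delta^2+V(x_0))v\|_p ,
\]
with $C$ independent of $V(x_0)$. For $p=1,\infty$ this can be read off from the kernel bound $\|G_\lambda\|_1$ used in Lemma \ref{interp-bila}.

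I apply this with $v=\vartheta u$ and expand:
\[
(\Delta^2+V(x_0))(\vartheta u)=-\vartheta Au+\vartheta\,(V(x_0)-V)u+[\Delta^2,\vartheta]u .
\]
The three pieces are handled as follows.
\begin{itemize}
\item The commutator $[\Delta^2,\vartheta]u$ contains terms $R^{-k}D^{4-k}u$ for $k=1,\dots,4$ on $B_{2R}(x_0)$. Since $R^{-1}=\frac{\gamma}{2\eta}U(x_0)^{1/4}\le C\frac{\gamma}{\eta}V(x)^{1/4}$ on $B_{2R}(x_0)$, they are bounded by $C\sum_k(\gamma/\eta)^k\|V^{k/4}D^{4-k}u\|_{p,2R}$.
\item The potential term is $|V(x_0)-V(x)|\,|u|$. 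Here I use (\ref{gradient}) in two ways: $U$ oscillates by at most a factor $1+O(\eta)$ on $B_{2R}(x_0)$, since $|DU^{-1/4}|\le\gamma/4$ gives $|U^{-1/4}(x)-U^{-1/4}(x_0)|\le \eta U^{-1/4}(x_0)$; and $V/U\in[c_0^{-1},c_0]$. Together these give $|V(x_0)-V(x)|\le \delta(\eta,c_0)V(x)$ with $\delta\to0$ as $\eta\to0$ and $c_0\to1$.
\end{itemize}

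Putting these together,
\[
\|Vu\|_{p,R}+\|\Delta^2u\|_{p,R}\le C\Big(\|Au\|_{p,2R}+\delta\|Vu\|_{p,2R}+\sum_{k=1}^{3}(\gamma/\eta)^k\|V^{k/4}D^{4-k}u\|_{p,2R}+(\gamma/\eta)^4\|Vu\|_{p,2R}\Big).
\]
Summing over the covering of Proposition \ref{pr:covering} (with bounded overlap $\xi$), or taking suprema when $p=\infty$, gives the same inequality globally. The intermediate terms are absorbed using Lemma \ref{interpolation} with $\varepsilon=1$:
\[
\|V^{k/4}D^{4-k}u\|_p\le \|\Delta^2u\|_p+C\|Vu\|_p .
\]
The result is
\[
\|Vu\|_p+\|\Delta^2u\|_p\le C\|Au\|_p+C\big(\delta+\gamma/\eta+(\gamma/\eta)^4\big)\big(\|Vu\|_p+\|\Delta^2u\|_p\big).
\]

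The main obstacle is closing this estimate, that is, making the bracket $C(\delta+\gamma/\eta+(\gamma/\eta)^4)$ smaller than $1$. The overlap constant $\xi$ depends only on $N$ and the Lipschitz constant, which is at most $\eta/2$, so it is uniform once $\eta\le1$. I would first fix $\eta$ small, which makes the frozen oscillation part of $\delta$ small. Then I choose $c_0$ close to $1$, which controls the rest of $\delta$, and finally $\gamma\ll\eta$ to make $\gamma/\eta$ small. This ordering is exactly where the smallness assumptions on $\gamma$ and $c_0$ in (\ref{gradient}) enter. With the bracket below $1$, the right-hand terms are absorbed and the stated inequality follows, with constants depending only on $N$ and $p$.
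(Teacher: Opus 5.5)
Your proposal is correct and follows essentially the same route as the paper: you freeze $V$ at $x_0$ on balls of radius comparable to $\gamma^{-1}U(x_0)^{-1/4}$, apply the sectoriality estimate for $\Delta^2+V(x_0)$ to $\vartheta u$, control the commutator and the oscillation $|V-V(x_0)|\le\delta V$, sum over the bounded-overlap covering, and close with Lemma \ref{interpolation}, fixing the radius parameter and $c_0$ first and then taking $\gamma$ small. The only differences are cosmetic: you keep $\|\Delta^2u\|_p$ on the left of the local estimate and use $\varepsilon=1$ in Lemma \ref{interpolation}, getting the small factor from $\gamma/\eta$, whereas the paper takes $\varepsilon=\gamma^{1/6}$.
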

\begin{proof}
Take $u\in C_c^\infty(\R^N)$. Let $0<\delta<1$ to be fixed later and let us set $\rho(x)=\frac{4}{\gamma}((1+\delta)^\frac{1}{4}-1)U(x)^{-\frac{1}{4}}$. Note that, as stated before,  by \eqref{gradient} it follows that $\rho(x)$ satisfies the assumptions of Proposition \ref{pr:covering} with Lipschitz constant smaller than $\frac{1}{2}$. We fix  $x_{0}\in \R^{N}$ and set $R=\rho (x_{0})=\frac{4}{\gamma}((1+\delta)^\frac{1}{4}-1)U(x_0)^{-\frac{1}{4}}$.
Observe that, if $x\in B_R(x_0)$ we have
$\left |U^{-\frac 14}(x)-U^{-\frac 14}(x_0)\right |
\leq ((1+\delta)^{\frac 14}-1)U^{-\frac 14}(x_0)
$ and then 
$(2-(1+\delta)^{\frac 14})U^{-\frac 14}(x_0)\leq U^{-\frac 14}(x)\leq (1+\delta)^{\frac 14}U^{-\frac 14}(x_0)$,
and since $(1-\delta)^\frac 14\leq 2-(1+\delta)^\frac 14$ 
\[(1-\delta)U(x)
\leq U(x_0)\leq (1+\delta)U(x).\]
Moreover
$$|U(x)-U(x_0)|\leq \delta U(x).$$
Taking into account the hypothesis on $V$ we obtain
\begin{equation}\label{eq:Vx-Vx_0}
\frac{1-\delta}{c_0^2}V(x)\leq V(x_0)\leq c_0^2(1+\delta)V(x)    
\end{equation}
and then
$V(x_0)-V(x)\leq V(x)(c_0^2(1+\delta)-1)$ and $V(x)-V(x_0)\leq V(x)\left (1-\frac{(1-\delta))}{c_0^2}\right)\leq V(x)(c_0^2(1+\delta)-1)$, so
\begin{equation*}\label{eq:abs-Vx-Vx_0}
|V(x)-V(x_0)|\leq V(x)(c_0^2(1+\delta)-1).  
\end{equation*}

\smallskip
Let now  $\vartheta\in C_c^{\infty }(\R^N)$ be such that $0\leq \vartheta \leq 1$,
$\vartheta(x)=1$ for $x\in B_{R/2}(x_{0}),\vartheta(x)=0$ for $x\in B^{c}_{R}(x_{0})$ and $|D^h\vartheta|\leq {C}{R^{-h}}$ for $h=1,2,3,4$. Let us start by the case $p<\infty$.
By the classical sectoriality estimate for the operator $-\Delta^2$ (see \cite{lun}) and the positivity of $V$, we have
\begin{align*}
\frac{(1-\delta)}{c_0^2}\|V(x) u\|_{p, R/2}&\leq \|V(x_0) u\|_{p, R/2}\leq \|V(x_0) \vartheta u\|_{p}
\leq C\|\Delta^2(\vartheta u)+V(x_0)\vartheta u\|_p\\ &\leq C(\|\Delta^2(\vartheta u)+V\vartheta u\|_p+ \|(V(x)-V(x_0))\vartheta u\|_p)\\ &\leq C\left(\|\vartheta Au\|_p+\sum_{k=1}^4\|D^k\vartheta D^{4-k}u\|_p+ \|(V(x)-V(x_0))\vartheta u\|_p\right)\\ &\leq C\left(\|Au\|_{p,R}+\sum_{k=1}^4\frac{1}{R^k}\| D^{4-k}u\|_{p,R}+ \|(V(x)-V(x_0)) u\|_{p,R}\right)\\ &\leq C\left(\|Au\|_{p,R}+\sum_{k=1}^4C(\delta)\gamma^kV(x_0)^\frac{k}{4}\| D^{4-k}u\|_{p,R}+ \|(V(x)-V(x_0)) u\|_{p,R}\right).
\end{align*}
By \eqref{eq:Vx-Vx_0} 
%and \eqref{eq:abs-Vx-Vx_0}
we obtain
\begin{align} \label{eq:cover-x000}
\frac{(1-\delta)}{c_0^2}\|V(x) u\|_{p, R/2}&\leq C\left(\|Au\|_{p,R}+\sum_{k=1}^4C(\delta)\gamma^k\| V(x)^\frac{k}{4}D^{4-k}u\|_{p,R}+ (c_0^2(1+\delta)-1)\|Vu\|_{p,R}\right).
\end{align}

Let $\{B_{\rho(x_n )/2}(x_n)\}$ be a countable covering of $\R^N$ as in Proposition \ref{pr:covering}
such that at most $\zeta $ among the double balls $\{B_{\rho(x_n )}(x_n)\}$ overlap.

We write \eqref{eq:cover-x000} with $x_0$ replaced by $x_n$ and sum over $n$.
Taking into account the above covering result, we get
\begin{align*} 
\frac{(1-\delta)}{c_0^2}\|V(x) u\|_{p}&\leq C\left(\|Au\|_{p}+\sum_{k=1}^4C(\delta)\gamma^k\| V(x)^\frac{k}{4}D^{4-k}u\|_{p}+ (c_0^2(1+\delta)-1)\|V u\|_{p}\right).
\end{align*}
It follows that
\begin{align*} 
\left( \frac{1-\delta}{c_0}-C(c_0^2(1+\delta)-1)\right)\|V(x) u\|_{p}&\leq C\left(\|Au\|_{p}+\sum_{k=1}^4C(\delta)\gamma^k\| V(x)^\frac{k}{4}D^{4-k}u\|_{p}\right).
\end{align*}
By choosing $\delta$ small enough and $c_0$ near to $1$ enough, we get
\begin{align*} 
\|V u\|_{p}&\leq C\left(\|Au\|_{p}+\sum_{k=1}^4\gamma^k\| V^\frac{k}{4}D^{4-k}u\|_{p}\right).
\end{align*}
Let us now estimate $\sum_{k=1}^4\gamma^k\| V^\frac{k}{4}D^{4-k}u\|_{p}$ by using Lemma  \ref{interpolation}.
For every $\varepsilon>0$ 
\begin{align*} 
\sum_{k=1}^4\gamma^k\| V^\frac{k}{4}D^{4-k}u\|_{p}&= \gamma\| V^\frac{1}{4}D^{3}u\|_{p}+\gamma^2\| V^\frac{1}{2}D^{2}u\|_{p}+\gamma^3\| V^\frac{3}{4}Du\|_{p}+\gamma^4\| Vu\|_{p}\\&\leq \gamma\left(\varepsilon \|\Delta^{2}u\|_{p}+\frac{C}{\varepsilon^{3}}\|Vu\|_{p})+\gamma^2(\varepsilon^2 \|\Delta^{2}u\|_{p}+\frac{C}{\varepsilon^2}\|Vu\|_{p}\right)\\&+\gamma^3\left(\varepsilon ^{3}\|\Delta^2u\|_{p}+\frac{C}{\varepsilon}\|Vu\|_{p}\right)+\gamma^4\| Vu\|_{p}
\end{align*}
By choosing $\varepsilon=\gamma^\frac{1}{6}$, we get 
\begin{align*} 
&\gamma\| V^\frac{1}{4}D^{3}u\|_{p}+\gamma^2\| V^\frac{1}{2}D^{2}u\|_{p}+\gamma^3\| V^\frac{3}{4}Du\|_{p}+\gamma^4\| Vu\|_{p}\\&\leq \gamma\left(\gamma^\frac{1}{6} \|\Delta^{2}u\|_{p}+\frac{C}{\gamma^\frac{1}{2}}\|Vu\|_{p})+\gamma^2(\gamma^\frac{1}{3} \|\Delta^{2}u\|_{p}+\frac{C}{\gamma^\frac{1}{3}}\|Vu\|_{p}\right)\\&+\gamma^3\left(\gamma^\frac{1}{2}\|\Delta^2u\|_{p}+\frac{C}{\gamma^\frac{1}{6}}\|Vu\|_{p})+\gamma^4\| Vu\|_{p}\leq C(\gamma)(\|Au\|_p+\|Vu\|_p\right)
\end{align*}
where $C(\gamma)=C\gamma^a$ for some $a>0$.
It follows that
\begin{align*} 
\|V u\|_{p}\leq C\|Au\|_{p}+C(\gamma)\|Vu\|_p
\end{align*} and the claim follows by choosing $\gamma$ small enough.
When $p=\infty$, we argue in similar way and we don't need to use the covering argument above.
\end{proof}

\begin{remark} \label{supp-compatto}
 Let $1<p<\infty$. The inequalities proved in Proposition \ref{interpolation} remain true for functions in $W^{4,p}_{\rm comp}(\R^N)$.
 \end{remark}

\subsection{Domain characterization for \texorpdfstring{$1<p<\infty$}{1<p<infty}}
 We are now able to characterize the domain of $A$ in $L^p$, $1<p<\infty$. Let us define, as usually, the maximal domain
 $$D(A_{p,\rm max})=\{u\in W^{4,p}_{\rm loc}(\R^N)\cap L^p(\R^N):\   Au\in L^p(\R^N)\}.$$
\begin{proposition} \label{domain2}
Let  $1<p<\infty$ and assume that $V$ satisfies (\ref{gradient}). Then 
$$D(A_{p})=D(A_{p,\rm max}).$$
\end{proposition}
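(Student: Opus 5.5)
The inclusion $D(A_p)\subset D(A_{p,\rm max})$ is immediate. If $u\in D(A_p)$, then $u\in W^{4,p}(\R^N)\subset W^{4,p}_{\rm loc}(\R^N)$, and $\Delta^2 u$ and $Vu$ both lie in $L^p$, hence so does $Au$. The work is in the reverse inclusion. The plan is to localize with the cut-offs $\varphi_n$ from the density lemma, apply the a priori estimate of Proposition \ref{domain} to the compactly supported truncations (using Remark \ref{supp-compatto}), and pass to the limit.

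Fix $u\in D(A_{p,\rm max})$ and set $u_n=\varphi_n u$. Since $u\in W^{4,p}_{\rm loc}$ and $\varphi_n$ has compact support, $u_n\in W^{4,p}_{\rm comp}(\R^N)$. The potential $V$ is bounded on compact sets because $U$ is continuous and positive, so $Vu_n\in L^p$ and $u_n\in D(A_p)$. Proposition \ref{domain}, extended to $W^{4,p}_{\rm comp}$ by Remark \ref{supp-compatto} or by mollification, gives
\[
\|\Delta^2 u_n\|_p+\|Vu_n\|_p\le C\|Au_n\|_p\le C\|\varphi_nAu\|_p+C\sum_{k=1}^4 n^{-k}\|D^{4-k}u\|_{L^p(B_{2n}\setminus B_n)}.
\]
The first term on the right is bounded by $C\|Au\|_p$.

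The main obstacle is the commutator sum, because we do not yet know that $D^{j}u\in L^p$ globally. I would handle it by absorption. Local elliptic regularity for $\Delta^2$ on the annulus, or the interpolation inequalities of Lemma \ref{interp-bila} applied to $u$ times a slightly larger cut-off, give
\[
n^{-k}\|D^{4-k}u\|_{L^p(B_{2n}\setminus B_n)}\le \varepsilon\|\Delta^2 u\|_{L^p(B_{4n}\setminus B_{n/2})}+C_\varepsilon n^{-4}\|u\|_p,
\]
since the cut-off derivatives scale like $n^{-j}$. A cleaner route is to avoid annular estimates altogether by using the weighted form of Lemma \ref{interpolation} on $u_n$. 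Note that $V\ge c>0$ need not hold, so the unweighted factor $n^{-k}$ has to be controlled by interpolation against $\|u\|_p$ and $\|\Delta^2(\psi_n u)\|_p$ with $\psi_n$ a cut-off equal to $1$ on the support of $\varphi_n$. Either way, setting $M_n=\sup_{m\le n}\|\Delta^2 u_m\|_p$, or more simply working with a single family of cut-offs with nested supports, I expect an inequality of the form
\[
\|\Delta^2 u_n\|_p+\|Vu_n\|_p\le C\|Au\|_p+\varepsilon \|\Delta^2 (\psi_n u)\|_p+C_\varepsilon\|u\|_p.
\]
Choosing $\varepsilon$ small and comparing $\psi_n u$ with $u_{2n}$ then yields a bound uniform in $n$. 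If that comparison does not close directly, the fallback is to rewrite $\Delta^2 u=-Au-Vu$ locally and use $Vu\in L^p_{\rm loc}$ together with the local $W^{4,p}$ Calderón–Zygmund estimate on balls of fixed radius, which gives $\|D^ju\|_{L^p(B_1(x))}\le C(\|\Delta^2u\|_{L^p(B_2(x))}+\|u\|_{L^p(B_2(x))})$. Summing over a uniform covering then makes the lower-order terms controllable by $\varepsilon\|\Delta^2 u_n\|$-type quantities plus $\|u\|_p$.

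With the uniform bound in hand, $\Delta^2u_n$ and $Vu_n$ are bounded in $L^p$. Since $u_n\to u$ in $L^p$ and, by Fatou, $Vu_n\to Vu$ pointwise, we get $Vu\in L^p$. Then $\Delta^2 u=-Au-Vu\in L^p$ in the distributional sense. Finally, $u\in L^p$ and $\Delta^2u\in L^p$ imply $u\in W^{4,p}(\R^N)$ by the Calderón–Zygmund estimates for $\lambda+\Delta^2$, for instance via the resolvent used in Lemma \ref{interp-bila}. Hence $u\in D(A_p)$.
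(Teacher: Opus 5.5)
Your easy inclusion, the truncations $u_n=\varphi_nu\in D(A_p)$, and the final step (Fatou, $\Delta^2u=-Au-Vu$, Calder\'on--Zygmund) are fine. The gap is the step you flag as the main obstacle: it is never closed, and neither of your two routes can close it.

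The inequality you expect is $\|\Delta^2u_n\|_p\le C\|Au\|_p+\varepsilon\|\Delta^2(\psi_nu)\|_p+C_\varepsilon\|u\|_p$, with $\psi_n=1$ on $\mathrm{supp}\,\varphi_n$. It compares the truncation at scale $n$ with one at a \emph{larger} scale, so it is a recursion $a_n\le K+\varepsilon a_{2n}$. Iterating gives only $a_n\le 2K+\varepsilon^m a_{2^mn}$. To conclude you would need $\varepsilon^m a_{2^mn}\to0$, i.e.\ a growth bound for $\|\Delta^2u\|_{L^p(B_R)}$ as $R\to\infty$; the sequence $a_n=\varepsilon^{-\log_2 n}$ satisfies the recursion and is unbounded.

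No such growth bound is available. The assumption $u\in W^{4,p}_{\rm loc}$ only makes each $a_n$ finite. Via $\Delta^2u=-Au-Vu$ these quantities are governed by $\sup_{B_R}V$, and \eqref{gradient} allows $V$ to grow arbitrarily fast, since it only asks $U^{-1/4}$ to be Lipschitz with small constant. Passing to $M_n=\sup_{m\le n}a_m$ changes nothing.

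Your fallback has the same defect. Summing the local estimate $\|D^ju\|_{L^p(B_1(x))}\le C(\|\Delta^2u\|_{L^p(B_2(x))}+\|u\|_{L^p(B_2(x))})$ over the transition annulus brings back $\|\Delta^2u\|_{L^p}=\|Au+Vu\|_{L^p}$ on a neighbourhood of a region where $\varphi_n\neq 1$. That is exactly the unknown size of $Vu$ outside $B_n$, not a quantity controlled by $\|\Delta^2u_n\|_p$.

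What is missing is a local estimate in which $V$ has been eliminated and the absorbed quantity is finite by construction. The paper gets it as follows.

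\begin{itemize}
\item It works on a ball $B_R(x_0)$ of \emph{fixed} radius, with cut-offs $\vartheta$ equal to $1$ on $B_{\sigma R}(x_0)$ and supported in $B_{\sigma' R}(x_0)$, where $\sigma'=\frac{1+\sigma}{2}$.
\item Applying Proposition~\ref{domain} to $\vartheta u\in W^{4,p}_{\rm comp}$ gives $\|\Delta^2(\vartheta u)\|_p\le C\|A(\vartheta u)\|_p$, with $C$ independent of $x_0$. This is where $V$ disappears without knowing that $Vu\in L^p$.
\item This is combined with the classical inequality $\|D^hw\|_p\le\varepsilon^{4-h}\|\Delta^2w\|_p+C\varepsilon^{-h}\|w\|_p$ and with the expansion $A(\vartheta u)=\vartheta Au+\text{(commutators)}$.
\item Choosing $\varepsilon\sim 1-\sigma'$, the commutators are absorbed into $\Phi(u)=\sup_{\sigma\in(0,1)}\sum_{h=1}^3(1-\sigma)^h\|D^hu\|_{p,\sigma R}$. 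This is finite because $u\in W^{4,p}(B_R(x_0))$.
\end{itemize}

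The outcome is $\sum_{h=1}^3\|D^hu\|_{p,R/2}\le C(\|Au\|_{p,R}+\|u\|_{p,R})$ uniformly in $x_0$, plus an analogous bound for $D^4u$. Summing over a bounded-overlap covering by balls of radius $R/2$ gives $u\in W^{4,p}(\R^N)$. Then $Vu=-Au-\Delta^2u\in L^p$ follows at once, so you never need to bound $\|Vu_n\|_p$ directly.

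Equivalently, once you know $D^ju\in L^p$ for $j\le 3$, your own first inequality closes, because the commutator terms tend to $0$ as $n\to\infty$.
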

 \begin{proof}
We have only to show the inclusion $D(A_{p,\rm max})\subseteq D(A_{p})$, being the other obvious. 
Take $x_0\in\R^N$, $R\in\R$ to be chosen later. 
Take  $\sigma\in (0,1)$ and set $\sigma':=\frac{\sigma+1}{2}$.
Consider a cutoff function $\vartheta\in \cc$ such that $0\leq\vartheta\leq 1$,
$\vartheta=1$ for $x\in B_{\sigma R}(x_{0})$, $\vartheta(x)=0$ for 
$x\in B^{c}_{\sigma'R}(x_{0})$ and $\|D^{h}\vartheta\|_{\infty}\leq {C}{R^{-h}(1-\sigma')^{-h}}$ for $h=0,1,2,3,4$.
Let $u\in D(A_{p,\rm max})$. In particular $u\in W_{\rm loc}^{4,p}(\R^{N})$. Since the classical interpolation inequalities for higher order derivatives and the inequalites proved in Proposition \ref{domain}, as already observed,  hold  for $\vartheta u\in W_{\rm comp}^{4,p}(\R^{N})$,   there exists a positive constant $C$ only depending on $N,p$ and $\gamma$ such that for $h=1,2,3$ the following inequalities hold
\[
\|D^h(\vartheta u)\|_{p}\leq \varepsilon ^{4-h}\|\Delta^{2}(\vartheta u)\|_{p}+\frac{C}{\varepsilon^h}\|(\vartheta u)\|_{p}\leq \varepsilon ^{4-h}\|A(\vartheta u)\|_{p}+\frac{C}{\varepsilon^h}\|(\vartheta u)\|_{p},
\]
and for $h=4$
\[
\|D^4(\vartheta u)\|_{p}\leq C\left( \|A(\vartheta u)\|_{p}+\|(\vartheta u)\|_{p}\right).
\]
It follows that for $h=1,2,3$ we have
\begin{align*}
 \|D^hu\|_{p,\sigma R} &\leq\varepsilon^{4-h} \|A(u\vartheta)\|_{p,\sigma'R}+\frac{C}{\varepsilon^h}\|u\|_{p,R}\\
& \leq \varepsilon^{4-h} \|Au\|_{p,R}+\varepsilon^{4-h} 
\big( \|D^{3}u D\vartheta\|_{p,\sigma'R}+\|D^{2}uD^{2}\vartheta\|_{p,\sigma'R}\\&\quad+\|DuD^{3}\vartheta\|_{p,\sigma'R}
	+\|uD^{4}\vartheta\|_{p,\sigma'R}  \big)+\frac{C}{\varepsilon^h}\|u\|_{p,R}\\
& \leq \varepsilon^{4-h} \|Au\|_{p,R}+\varepsilon^{4-h} 
\left( \frac{1}{R(1-\sigma')}\|D^{3}u \|_{p,\sigma'R}+\frac{1}{R^{2}(1-\sigma')^{2}}\|D^{2}u\|_{p,\sigma'R}\right.\\
&\quad \left.
	+\frac{1}{R^{3}(1-\sigma')^{3}}\|Du\|_{p,\sigma'R}
	+\frac{1}{R^{4}(1-\sigma')^{4}}\|u\|_{p,\sigma'R}  \right) +\frac{C}{\varepsilon^h}\|u\|_{p,R}.
\end{align*}
For $h=4$ we obtain
\begin{align}
 \|D^{4}u\|_{p,\sigma R }&\leq C\left(  \|Au\|_{p,R}+
 \frac{1}{R(1-\sigma')}\|D^{3}u \|_{p,\sigma'R}+\frac{1 }{R^2(1-\sigma')^{2}}\|D^{2}u\|_{p,\sigma'R}\right.\nonumber \\
&\quad \left.
	+\frac{1 }{R^3(1-\sigma')^{3}}\|Du\|_{p,\sigma'R}
	+\frac{1}{R^4(1-\sigma')^{4}}\|u\|_{p,\sigma'R} +\|u\|_{p,R} \right).\label{eq:quartordine}
\end{align}

Set $\varepsilon=\frac{1}{(2^h6C)^{\frac{1}{4-h}}}(1-\sigma')$, then one can find a suitable positive $C_1(R)$ such that 
for $h=1,2,3$
\begin{align*}
\|D^hu\|_{p,\sigma R}&\leq C_1(1-\sigma')^{4-h} \|Au\|_{p,R}+\frac{1}{2^h6}
\left( (1-\sigma')^{3-h}\|D^{3}u \|_{p,\sigma'R}+(1-\sigma')^{2-h}\|D^{2}u\|_{p,\sigma'R}\right.\\
&\qquad \left.
	+(1-\sigma')^{1-h}\|Du\|_{p,\sigma'R}
	+(1-\sigma')^{-h}\|u\|_{p,\sigma'R}  \right)+\frac{C_1}{(1-\sigma')^h}\|u\|_{p}.\end{align*}
Multiply by $(1-\sigma')^h$, taking into account that $1-\sigma'=\frac{1-\sigma}{2}$, it follows that
\begin{align*}\label{eq:interp.01}
(1-\sigma)^h\|D^hu\|_{p,\sigma R}&\leq C \|Au\|_{p,R}+\frac{1}{6}
\left( (1-\sigma')^{3}\|D^{3}u \|_{p,\sigma'R}+(1-\sigma')^{2}\|D^{2}u\|_{p,\sigma'R}\right.\nonumber \\
&\qquad \left.
	+(1-\sigma')\|Du\|_{p,\sigma'R}
	+\|u\|_{p,\sigma'R}  \right)+C\|u\|_{p}.
\end{align*}

Now, summing for $h=1,2,3$ one has
\begin{align*}
&
(1-\sigma)^{3}\|D^{3}u \|_{p,\sigma R}+(1-\sigma)^{2}\|D^{2}u\|_{p,\sigma R}+(1-\sigma)\|Du\|_{p,\sigma R}
 \leq C \|Au\|_{p,R}+C\|u\|_{p}\\
&\qquad +\frac{1}{2}\left( (1-\sigma')^{3}\|D^{3}u \|_{p,\sigma'R}+(1-\sigma')^{2}\|D^{2}u\|_{p,\sigma'R}
	+(1-\sigma')\|Du\|_{p,\sigma'R}  \right).
\end{align*}
Set $\Phi(u)=\sup_{\sigma\in (0,1)}\left\{(1-\sigma)^{3}\|D^{3}u \|_{p,\sigma R}+(1-\sigma)^{2}\|D^{2}u\|_{p,\sigma R}+(1-\sigma)\|Du\|_{p,\sigma R} \right\}$
then the above inequality reads as 
\[
\Phi(u)\leq \frac{1}{2}\Phi(u)+C\left( \|Au\|_{p,R}+\|u\|_{p}\right)
\]
and then
\begin{align*}
&\frac{1}{8}\left(
	\|D^{3}u \|_{p,\frac{1}{2}R}+\| D^{2}u\|_{p,\frac{1}{2} R}+\|Du\|_{p,\frac{1}{2} R}
	\right) \\
&\quad \leq \frac{1}{8}\|D^{3}u \|_{p,\frac{1}{2}R}+\frac 14 \|D^{2}u\|_{p,\frac{1}{2} R}+\frac 12\|Du\|_{p,\frac{1}{2} R} \\
&\quad \leq \Phi (u)\leq  C \|Au\|_{p,R}+C\|u\|_{p}.
\end{align*}
Therefore, for all $u \in W_{\rm loc}^{4,p}(\R^{N})$ we have
\begin{equation}\label{eq:cover-max}
\|D^{3}u \|_{p,\frac{1}{2}R}+\| D^{2}u\|_{p,\frac{1}{2} R}+\|Du\|_{p,\frac{1}{2} R}
\leq C\left( \|Au\|_{p,R}+\|u\|_{p,R} \right)
\end{equation}
and by \eqref{eq:quartordine}
\begin{equation}\label{eq:cover-max2}
\|D^{4}u\|_{p,\frac 12 R} \leq 
C\left(\|Au\|_{p,R}+  \|D^{3}u \|_{p,R}+\| D^{2}u\|_{p, R}+\|Du\|_{p,R}
+\|u\|_{p,R} \right).
\end{equation}

Now let $u\in W_{\rm loc}^{4,p}(\R^{N})\cap L^{p}(\R^{N})$ such that $Au\in L^{p}(\R^{N})$ and
let $\{B_{\frac{1}{2}R}(x_n)\}$ be a countable covering of $\R^N$ as in Proposition \ref{pr:covering}
such that at most $\zeta $ among the double balls $\{B_{R}(x_n)\}$ overlap.

We write \eqref{eq:cover-max} with $x_0$ replaced by $x_n$ and summing over $n$ we obtain
\[
\|D^{3}u \|_{p}+\| D^{2}u\|_{p}+\|Du\|_{p}
 \leq C\left( \|Au\|_{p}+\|u\|_{p} \right).
\]
Writing \eqref{eq:cover-max2} with $x_0$ replaced by $x_n$ and summing over $n$ we obtain
\begin{align*}
 \|D^{4}u\|_{p}&\leq 
C\left(\|Au\|_{p}+
\|D^{3}u \|_{p}+\| D^{2}u\|_{p}+\|Du\|_{p}+\|u\|_{p}
\right)	
\\
&  \leq C\left( \|Au\|_{p}+\|u\|_{p} \right)
\end{align*}
and therefore
\[\|u\|_{D(A_p)}\leq C(\|Au\|_p+\|u\|_p).\]

\end{proof}

\subsection{A priori estimates for an auxiliary operator}
To prove the sectoriality estimate for the resolvent as in \cite[Theorem 2.1]{agm62}, we need a priori estimates also for the elliptic operator $A_1=-\Delta^2-e^{i\theta}D_t^{(4)}-V(x)$, $\theta\in \left[-\frac{\pi}{2},\frac{\pi}{2}\right]$, in $L^p(\R^{N+1})$. The next results will be stated and proved for $1<p<\infty$. The proof for the extremal cases $1$ and $\infty$ will be different and separately studied.  
\begin{proposition} \label{a-prioriAux}
Let  $1<p<\infty$ and assume that $V$ satisfies (\ref{gradient}). Then there exists a positive constant $C=C(N,p,c)$ such that
\begin{align*}
 \|D^{(4)}_tu\|_{L^p(\R^{N+1})}+\|\Delta^2 u\|_{L^p(\R^{N+1})}+\|V u\|_{L^p(\R^{N+1})}\leq C\|A_1u\|_{L^p(\R^{N+1})}
\end{align*}
and 
\begin{align*}
\sum_{k=1}^4 \|D^{(k)}_tu\|_{L^p(\R^{N+1})}+\|\Delta^2 u\|_{L^p(\R^{N+1})}+\|V u\|_{L^p(\R^{N+1})}\leq C\left(\|A_1u\|_{L^p(\R^{N+1})}+\|u\|_{L^p(\R^{N+1})}\right)
\end{align*}
for every $u\in C_c^\infty(\R^{N+1})$.
\end{proposition}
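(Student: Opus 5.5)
We follow the scheme of Proposition \ref{domain}, working in $\R^{N+1}$ with variables $(x,t)$ and viewing $V$ as a function of $x$ only. The first ingredient is the constant coefficient estimate. For fixed $x_0\in\R^N$ and $\lambda=V(x_0)\ge 0$, the operator $\Delta^2+e^{i\theta}D_t^{(4)}+\lambda$ with $\theta\in[-\frac{\pi}{2},\frac{\pi}{2}]$ has symbol $|\xi|^4+e^{i\theta}\tau^4+\lambda$. Its real part is at least $|\xi|^4+\lambda$, and in the endpoint cases $|\theta|=\frac{\pi}{2}$ its modulus is still comparable to $|\xi|^4+\tau^4+\lambda$. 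By the Mikhlin multiplier theorem we therefore get, for $v\in C_c^\infty(\R^{N+1})$,
$$\|D_t^{(4)}v\|_p+\|\Delta^2 v\|_p+\lambda\|v\|_p\le C\|\Delta^2 v+e^{i\theta}D^{(4)}_tv+\lambda v\|_p ,$$
with $C$ independent of $\lambda$ and $\theta$. This needs $1<p<\infty$, which explains the restriction in the statement. The multipliers involved are of the form $\xi^\alpha\tau^\beta/(|\xi|^4+e^{i\theta}\tau^4+\lambda)$, and they are homogeneous-type symbols uniformly in $\lambda$ by scaling.

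Next comes localization in $x$ only. We use balls $B_R(x_0)$ with $R=\rho(x_0)$ exactly as in the proof of Proposition \ref{domain}, and a cutoff $\vartheta(x)$ independent of $t$. The commutator $[A_1,\vartheta]$ then involves only $x$-derivatives, $\sum_k D^k\vartheta\, D_x^{4-k}u$, because $D_t^{(4)}$ commutes with $\vartheta$. We apply the constant coefficient estimate to $\vartheta u$ and replace $V(x_0)$ by $V(x)$ using \eqref{eq:Vx-Vx_0}. Covering $\R^N$ by Proposition \ref{pr:covering} (the sets $B_{\rho(x_n)}(x_n)\times\R$ have bounded overlap) and integrating also in $t$ gives
$$\|D_t^{(4)}u\|_p+\|\Delta^2u\|_p+\|Vu\|_p\le C\Big(\|A_1u\|_p+\sum_{k=1}^4C(\delta)\gamma^k\|V^{k/4}D_x^{4-k}u\|_p+(c_0^2(1+\delta)-1)\|Vu\|_p\Big).$$
The weighted terms are handled by Lemma \ref{interpolation}, applied for a.e. fixed $t$ to $u(\cdot,t)$ and then integrated in $t$; this is legitimate because the $x$-weights and derivatives do not involve $t$. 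They are thus bounded by $C\gamma^a(\|\Delta^2u\|_p+\|Vu\|_p)$ with the choice $\varepsilon=\gamma^{1/6}$. Taking $\delta$ and $\gamma$ small and $c_0$ close to $1$, these terms are absorbed into the left side, which yields the first inequality.

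For the second inequality, the lower order $t$-derivatives follow from the one-dimensional version of Lemma \ref{interp-bila}, applied in $t$ for fixed $x$ and then integrated:
$$\|D_t^{(k)}u\|_p\le \varepsilon\|D_t^{(4)}u\|_p+C_\varepsilon\|u\|_p,\qquad k=1,2,3.$$
Together with the first inequality this gives the claim; the term $D^{(4)}_t$ is bounded directly by the first inequality.

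The main obstacle is the uniform, in $\lambda\ge0$ and in $\theta$, multiplier estimate for the frozen operator, in particular at $\theta=\pm\frac{\pi}{2}$. There I would check the lower bound $|\,|\xi|^4+\lambda\pm i\tau^4|\ge\frac{1}{\sqrt2}(|\xi|^4+\lambda+\tau^4)$ and then verify the Mikhlin conditions by rescaling $(\xi,\tau)\mapsto\lambda^{1/4}(\xi,\tau)$.
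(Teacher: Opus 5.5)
Your proposal is correct and follows the route the paper indicates. You localize in $x$ only with $t$-independent cutoffs and cover $\R^N$ exactly as in Proposition \ref{domain}, absorb the weighted lower-order terms by applying Lemma \ref{interpolation} for each fixed $t$, and obtain the second inequality by one-dimensional interpolation in $t$. You also supply a step the paper leaves implicit: the frozen-coefficient estimate for $\Delta^2+e^{i\theta}D_t^{(4)}+V(x_0)$, uniform in $\theta\in[-\frac{\pi}{2},\frac{\pi}{2}]$, via the bound $|a+e^{i\theta}b|\ge\frac{1}{\sqrt2}(a+b)$ for $a,b\ge 0$, $\cos\theta\ge0$, together with Mikhlin's theorem. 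This is what lets you control $\|\Delta^2u\|_p$ and $\|D^{(4)}_tu\|_p$ separately.
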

\begin{proof} 
%Let $u\in C_c^\infty(\R^{N+1})$, $\theta\in \left[-\frac{\pi}{2},\frac{\pi}{2}\right]$. 
The claim follows as in Proposition \ref{domain} by localization and covering in $\R^N$ and by using the classical interpolation inequalites for the derivatives with respect to $t$.
\end{proof}

 \section{Generation results in \texorpdfstring{$L^p(\R^N)$}{Lp(RN)}, \texorpdfstring{$1<p<\infty$}{1<p<infty}}
% In this section we investigate the generation of an analytic semigroup in $L^p(\R^N)$ for $1<p<\infty$.
 Following Agmon, we first prove an a priori estimate for the resolvent.

\begin{theorem} \label{agmon-comp}
Let  $1<p<\infty$ and assume that $V$ satisfies (\ref{gradient}). Then, if $\lambda=re^{i\theta}$ for some $r>r_{p}$ and $\theta\in [-\frac{\pi}{2},\frac{\pi}{2}]$, the following inequality holds for every $u\in D(A_p)$
$$|\lambda|\|u\|_p\leq C\|\lambda u-Au\|_p$$
where $C=C(N,p,c)$.
 \end{theorem}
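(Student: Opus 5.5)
We use Agmon's trick with the auxiliary operator $A_1=-\Delta^2-e^{i\theta}D_t^{(4)}-V(x)$ of Proposition \ref{a-prioriAux}. By the density lemma it is enough to prove the estimate for $u\in C_c^\infty(\R^N)$. Fix $\lambda=re^{i\theta}$ with $\theta\in[-\frac{\pi}{2},\frac{\pi}{2}]$, and fix $\zeta\in C_c^\infty(\R)$ with $\zeta\equiv1$ on $[-1,1]$ and support in $[-2,2]$. Define
\[
v(x,t)=\zeta(t)e^{ir^{1/4}t}u(x)\in C_c^\infty(\R^{N+1}).
\]
Then $D_t^{(4)}\bigl(e^{ir^{1/4}t}\bigr)=r\,e^{ir^{1/4}t}$, so $-e^{i\theta}D_t^{(4)}$ acting on the exponential factor produces $-\lambda$. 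Consequently
\[
A_1v=\zeta(t)e^{ir^{1/4}t}\bigl(Au-\lambda u\bigr)-e^{i\theta}e^{ir^{1/4}t}u(x)\sum_{k=1}^{4}\binom{4}{k}\zeta^{(k)}(t)\,(ir^{1/4})^{4-k}.
\]

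Next we apply the second inequality of Proposition \ref{a-prioriAux} to $v$, keeping only the $D_t^{(4)}$ term on the left. The first inequality, without the $\|u\|$ term, is actually enough here. Since the $x$ and $t$ variables separate, every $L^p(\R^{N+1})$ norm factors into an $L^p(\R)$ norm in $t$ times an $L^p(\R^N)$ norm in $x$. This gives
\[
\|D_t^{(4)}v\|_{L^p(\R^{N+1})}\le C\Bigl(\|\lambda u-Au\|_p+\sum_{k=1}^{4}r^{\frac{4-k}{4}}\|u\|_p\Bigr).
\]
For the lower bound we restrict to $t\in(-1,1)$, where $\zeta\equiv1$ and therefore $|D_t^{(4)}v|=r|u(x)|$. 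Hence $\|D_t^{(4)}v\|_{L^p(\R^{N+1})}\ge 2^{1/p}\,r\|u\|_p$.

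Combining the two bounds yields
\[
r\|u\|_p\le C\|\lambda u-Au\|_p+C\bigl(r^{3/4}+r^{1/2}+r^{1/4}+1\bigr)\|u\|_p.
\]
Choose $r_p$ so large that the bracket is at most $\frac{r}{2C}$ for $r>r_p$. The lower-order terms are then absorbed into the left side, giving $|\lambda|\|u\|_p\le 2C\|\lambda u-Au\|_p$. Finally, the estimate extends from $C_c^\infty(\R^N)$ to all of $D(A_p)$ by density: both sides are continuous in the $D(A_p)$ norm.

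The main obstacle is making sure the constant in Proposition \ref{a-prioriAux} is uniform in $\theta\in[-\frac{\pi}{2},\frac{\pi}{2}]$. The operator $-\Delta^2-e^{i\theta}D_t^{(4)}$ must be uniformly elliptic on this closed range, which is true since its symbol $-|\xi|^4-e^{i\theta}\tau^4$ has real part $-|\xi|^4-\cos\theta\,\tau^4\le0$ and does not vanish for $(\xi,\tau)\neq0$ when $|\theta|\le\pi/2$. In addition, the localization argument of Proposition \ref{domain} must go through with the frozen potential $V(x_0)$ together with the complex $t$-part. To handle this, we would apply the standard elliptic estimate for the constant-coefficient operator $-\Delta^2-e^{i\theta}D_t^{(4)}-V(x_0)$ in $\R^{N+1}$, with $V(x_0)\ge0$. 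That estimate follows from a Mikhlin multiplier argument with constants uniform in $\theta$ and in $V(x_0)$.
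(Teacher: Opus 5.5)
Your proof is correct and follows the paper's own argument. Both use Agmon's trick: take $v=\psi(t)e^{i\omega t}u(x)$, apply the a priori estimate of Proposition~\ref{a-prioriAux} for $A_1$, bound from below on the strip where the cutoff equals $1$, absorb the lower-order powers of $r$ for $r$ large, and conclude by density. The differences are only cosmetic: you take frequency $\omega=r^{1/4}$ so that $\lambda=re^{i\theta}$ matches the statement directly (the paper uses $e^{irt}$ and sets $\lambda=r^4e^{i\theta}$), and you keep only the $D_t^{(4)}$ term instead of summing $D_t^{(k)}$ over $k=1,\dots,4$.
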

\begin{proof}
Let $u\in C_{c}^{\infty}(\R^{N})$ and set $v(t,x)=\psi (t)e^{ir t}u(x)$ where $r>0$ and $\psi\in C_c^\infty(\R)$ is such that $0\leq\psi\leq1,\psi(t)=1$ for $t\in \left[-\frac{1}{2},\frac{1}{2}\right],\ \psi(t)=0$ for $|t|\geq 1$ and $|D^h\psi|\leq {C} ,h=1,2,3,4$.
By Proposition \ref{a-prioriAux} 
\begin{align*}
\sum_{k=1}^4 \|D^{(k)}_tv\|_{L^p(\R^{N+1})}
	+\|\Delta^2 v\|_{L^p(\R^{N+1})}+\|Vv\|_{L^p(\R^{N+1})}\leq C(\|A_1v\|_{L^p(\R^{N+1})}+\|v\|_{L^p(\R^{N+1})}).
\end{align*}
Observe  that 
$$\|A_1 v\|_{L^p(\R^{N+1})}\leq\|\Delta ^2 u+e^{i\theta}r^4u+Vu\|_{L^p(\R^{N})}+C\sum_{k=0}^3r^{k}\|u\|_{L^p(\R^{N})}.$$
Therefore
\begin{align*}
\sum_{k=1}^4 &\|D^{(k)}_te^{irt}u\|_{L^p\left(\R^{N}\times\left[-\frac{1}{2},\frac{1}{2}\right]\right)}+\|e^{irt}\Delta^2 u\|_{L^p\left(\R^{N}\times\left[-\frac{1}{2},\frac{1}{2}\right]\right)}+\|e^{irt}V u\|_{L^p\left(\R^{N}\times\left[-\frac{1}{2},\frac{1}{2}\right]\right)}\\&
	\leq C\left(\|\Delta ^2u+r^4e^{i\theta}u+Vu\|_{L^p(\R^{N})}+\sum_{k=0}^3r^{k}\|u\|_{L^p(\R^{N})}\right).
\end{align*}
On the other hand
\begin{align*}
\sum_{k=1}^4 &\|D^{(k)}_te^{irt}u\|_{L^p\left(\R^{N}\times\left[-\frac{1}{2},\frac{1}{2}\right]\right)}= \sum_{k=1}^4 \|r^ku\|_{L^p\left(\R^{N}\right)}
\end{align*}
and then
\[
\left( r^{4}+(1-C)\sum_{k=0}^{3}r^{k} \right)\|u\|_{L^p\left(\R^{N}\right)}\leq C(\|\Delta ^2 u+r^4e^{i\theta}u+Vu\|_{L^p(\R^{N})}.
\]
Since 
$r^4-C\sum_{k=0}^3r^{k}\geq \frac{r^4}{2}$ for $r$ large enough, it follows that
$$r^4\|u\|_{L^p\left(\R^{N}\right)}\leq C\|\Delta ^2 u+r^4e^{i\theta}u+Vu\|_{L^p(\R^{N})}.$$ 
Setting $\lambda=r^{4}e^{i\theta}$ we have
$$|\lambda| \|u\|_{L^p\left(\R^{N}\right)}\leq C\|\Delta ^2 u+Vu+\lambda u\|_{L^p(\R^{N})}.$$
\end{proof}

% The previous estimate leads to the solvability of the elliptic problem. 
In order to get generation of an analytic semigroup from the sectoriality estimate above, we prove the existence of the solution of the elliptic problem.
\begin{theorem}\label{solv}
Let  $1<p<\infty$ and assume that $V$ satisfies (\ref{gradient}). Then there exists $r_p>0$ such that  the equation $\lambda u-Au=f$ is uniquely solvable in $D(A_p)$ for $f\in L^p(\R^N)$ and  $\Rp\lambda>r_p$.  
\end{theorem}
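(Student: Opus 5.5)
Uniqueness follows directly from Theorem \ref{agmon-comp}. If $u\in D(A_p)$ satisfies $\lambda u-Au=0$ with $\Rp\lambda>r_p$, then $\lambda=|\lambda|e^{i\theta}$ with $\theta\in(-\frac{\pi}{2},\frac{\pi}{2})$. Enlarging $r_p$ if necessary, we have $|\lambda|\ge \Rp\lambda>r_p$, so $|\lambda|\|u\|_p\le C\|\lambda u-Au\|_p=0$. The real work is existence, which I would obtain by approximating $V$ with bounded potentials and passing to the limit with the uniform estimates.

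\textbf{Approximate problems.} For $n\in\N$ set $V_n=\min\{V,n\}$. A cleaner choice keeps hypothesis (\ref{gradient}) with the same $\gamma$ and $c_0$ uniformly in $n$: take $U_n=\min\{U,n\}$, or a smooth version such as $U_n=U\,n/(U+n)$. The latter satisfies $|DU_n|\le |DU|\,n^2/(U+n)^2\le \gamma U_n^{5/4}$, since $U^{5/4}n^2/(U+n)^2\le (Un/(U+n))^{5/4}$. Then put $V_n=VU_n/U$, which satisfies $c_0^{-1}U_n\le V_n\le c_0U_n$. Since $V_n$ is bounded, $A^{(n)}=-\Delta^2-V_n$ is a bounded perturbation of $-\Delta^2$ on $W^{4,p}$. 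The bi-Laplacian is sectorial in $L^p$, so $\lambda-A^{(n)}$ is invertible on $D(\Delta^2)=W^{4,p}(\R^N)=D(A^{(n)}_p)$, at least for $\Rp\lambda>\|V_n\|_\infty+c$. To obtain invertibility for all $\Rp\lambda>r_p$ with $r_p$ independent of $n$, I would use a continuity argument. Theorem \ref{agmon-comp} applies to $V_n$ with constants independent of $n$, because all constants depend only on $N,p,\gamma,c_0$. So for fixed $n$ the set of $\lambda$ in the half-plane $\Rp\lambda>r_p$ where $\lambda-A^{(n)}$ is surjective is open, and by the uniform resolvent bound it is also closed. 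It contains large real $\lambda$, hence it is the whole connected half-plane. This gives $u_n\in W^{4,p}$ with $\lambda u_n-A^{(n)}u_n=f$ and
\[
|\lambda|\|u_n\|_p\le C\|f\|_p .
\]

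\textbf{Uniform bounds and passage to the limit.} Next I apply Proposition \ref{domain}, again with constants uniform in $n$, to $A^{(n)}u_n=\lambda u_n-f$. This gives $\|\Delta^2u_n\|_p+\|V_nu_n\|_p\le C(|\lambda|+1)\|f\|_p$. Proposition \ref{domain2}, with its local estimates (\ref{eq:cover-max}) and (\ref{eq:cover-max2}), then bounds $\|u_n\|_{W^{4,p}}$ uniformly. By reflexivity ($1<p<\infty$), a subsequence converges weakly in $W^{4,p}$ to some $u$, and strongly in $W^{3,p}_{\rm loc}$ by Rellich. Since $V_n\to V$ pointwise and $V_nu_n$ is bounded in $L^p$, the products $V_nu_n$ converge weakly to $Vu$ in $L^p$, using local strong convergence of $u_n$ together with Fatou's lemma for the bound $\|Vu\|_p<\infty$. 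Passing to the limit in the equation in the sense of distributions gives $\lambda u+\Delta^2u+Vu=f$ with $u\in D(A_p)$.

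\textbf{Main obstacle.} The delicate point is ensuring that the constants in Proposition \ref{domain} and Theorem \ref{agmon-comp} are uniform in $n$. This requires the truncation to preserve (\ref{gradient}) with the same small $\gamma$ and $c_0$ close to $1$, which is why I would use the smooth truncation $U_n=Un/(U+n)$ rather than a plain minimum. The continuity argument in $\lambda$ must also rely only on these uniform bounds. Alternatively, one could first solve with $V_n$ for a fixed large $\lambda$ and then extend to the whole half-plane $\Rp\lambda>r_p$ for the limit operator $A$ itself. That works because $A$ is closed on $D(A_p)$ by Proposition \ref{domain2} and the a priori estimate of Theorem \ref{agmon-comp} holds there, so the resolvent set of $A$ contains the full half-plane.
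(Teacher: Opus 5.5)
Your proposal is correct and follows essentially the paper's proof. Both arguments use bounded approximating potentials satisfying (H') uniformly, the continuity (ball-covering) argument in $\lambda$ to put the whole half-plane $\Rp\lambda>r_p$ into each $\rho(A^{(n)})$, uniform bounds on $\lambda u_n$, $\Delta^2u_n$, $V_nu_n$ and $\|u_n\|_{W^{4,p}}$, and a weak plus a.e. limit. Your truncation $V_n=VU_n/U$ with $U_n=Un/(U+n)$ plays the role of the paper's $V_\sigma=V/(1+\sigma V)$ and makes the uniform validity of (H') explicit.

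The only weak point is the closing aside. Bounded perturbation alone gives no single $\lambda$ in the resolvent set of $A^{(n)}$ for all $n$, so that alternative would still need the continuity argument for each $n$.
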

 \begin{proof}
 
 Let $\sigma>0$ and define the function
$$V_\sigma(x)=\frac{V(x)}{1+\sigma V(x)},\quad x\in\R^N.$$ Observe that, if $V(x)\geq c_0>0$,  $\frac{c_0}{1+\sigma c_0}\leq V_\sigma(x)\leq\sigma^{-1}$, $|D V_\sigma|\leq \gamma V_\sigma^\frac{5}{4}$ with  $\gamma$ as before and independent of $\sigma$. Consider the operators with bounded coefficients $A_\sigma=-\Delta^2-V_\sigma$. 
By \cite[Theorem 3.2.2]{lun}, there exist $\omega_{\sigma,p}\in\R$, $M_{\sigma,p}>0$ such that $\{\lambda\in\C:\ \Rp \lambda\geq \omega_{\sigma,p}\}\subset\rho(A_{\sigma})$ and 
\begin{equation*} %\label{stima-dipendente}
|\lambda|\|u\|_p\leq M_{\sigma,p}\|\lambda u-A_\sigma u\|_p
\end{equation*} for every $u\in W^{4,p}(\R^N)$  and $\lambda\in\C$ with $\Rp \lambda\geq \omega_{\sigma,p}$.
Morever, by applying Theorem  \ref{agmon-comp}  to $A_\sigma$, since the condition in (\ref{gradient}) is satisfied with a constant independent of $\sigma$, we have that there exists $r_p>0$, $C_p>0$  (independent of $\sigma$) such that  for every  $\lambda\in\C$ with $\Rp\lambda>r_p$ and for every $u\in C_c^\infty(\R^N)$, and then in $W^{4,p}(\R^N)$ by density, 
\begin{equation} \label{stima-ind}
|\lambda|\|u\|_p\leq C_p\|\lambda u-A_\sigma u\|_p
\end{equation}
for some constant $C_p$ independent of $\sigma$.  Let $\overline{\lambda}\in \rho (A_{\sigma })$ and $\Rp\overline{\lambda}>r_p$, then the inequality (\ref{stima-ind}) gives that
\begin{equation} \label{resolvent}
\|R(\overline{\lambda},A_\sigma)\|_p\leq \frac{C_p}{|\overline{\lambda}|}\leq \frac{C_p}{r_p}.
\end{equation}
By classical results, if $|\lambda-\overline{\lambda}|<\frac{1}{\|R(\overline{\lambda},A_\sigma)\|_p}$, then  $\lambda\in\rho (A_{\sigma })$. By (\ref{resolvent}), if
$|\lambda-\overline{\lambda}|<\frac{r_p}{C_p}$, then  $\lambda\in\rho (A_{\sigma })$.  
By covering with balls of radius $\frac{r_p}{C_p}$, it follows that, if $\Rp\lambda>r_p$, then $\Rp\lambda\in\rho(A_{\sigma })$.

Let $\Rp\lambda>r_p$, $f\in L^p(\R^N)$. Denote by $u_\sigma\in W^{4,p}(\R^N)$ the unique solution of $\lambda u-A_\sigma u=f$. Then $\{\lambda u_\sigma\}$ is bounded in $L^p(\R^N)$ and from $A_\sigma u_\sigma=\lambda u_\sigma -f$ we have that also $\{A_\sigma u_\sigma\}$ is bounded in $L^p(\R^N)$. Moreover, as in Proposition \ref{domain}, we have that
\begin{equation*}\label{a-priori-sigma}
\| \Delta^2 u_\sigma\|_p+\|V_\sigma u_\sigma\|_p\leq C\|\lambda u_\sigma-A_\sigma u_\sigma\|_p=C\|f\|_p\end{equation*}  for some constant $C$ independent of $\sigma$. By the classical interpolation inequalities, 
$$\sum_{h=0}^4|\lambda|^{1-\frac{h}{4}}\|D^hu_\sigma\|_p\leq C\left(\|\Delta^2 u_\sigma\|_p+\|\lambda u_\sigma\|_p\right)\leq C\|f\|_p.$$
Therefore, there exists a subsequence $u_{\sigma_n}$ and a function $u$ such that $u_{\sigma_n}$ converges weakly in $W^{4,p}(\R^N)$ and strongly in $W^{3,p}(\R^N)$ to $u$. Moreover  $u_{\sigma_n}$, $D^{(k)}u_{\sigma_n}$ converge pointwise almost everywhere to $u$, $D^{(k)}u$ for every $1\leq k\leq 3$ and hence $\|Vu\|_p\leq C\|f\|_p$ and $u\in D_p(A)$. Passing to the limit in $L^p_{\textrm{loc}}$ in the equation $\lambda u_{\sigma_n}+\Delta^2u_{\sigma_n}+Vu_{\sigma_n}=f$, we get that $u$ solves
 $$\lambda u+\Delta^2u+Vu=f.$$
This proves that $u\in D(A_p)$ solves $\lambda u-Au=f$. 
\end{proof}

We proved that the operator $A$ is sectorial and that a suitable half plain is in the resolvent set. By standard classical generation results, cf. \cite[Ch. II, Theorem 4.6]{eng-nag}, we have the following generation theorem.
\begin{theorem}
Let  $1<p<\infty$ and assume that $V$ satisfies (\ref{gradient}). Then, $(A,D_p(A))$ generates an analytic semigroup in $L^p(\R^N)$.
\end{theorem}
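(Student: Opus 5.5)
The generation result will follow from the classical characterization of generators of analytic semigroups, \cite[Ch. II, Theorem 4.6]{eng-nag}. An operator $(B,D(B))$ on a Banach space generates an analytic semigroup provided three conditions hold: $B$ is densely defined; some right half-plane $\{\Rp\lambda>\omega\}$ lies in $\rho(B)$; and $\|\lambda R(\lambda,B)\|\leq M$ there. A sectoriality estimate on a half-plane is equivalent to an estimate on a sector of angle larger than $\pi/2$ centered at $\omega$, and this gives analyticity. So the plan is to check these three conditions for $A_p$ on $L^p(\R^N)$ with $1<p<\infty$.

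\textbf{Density.} Since $C_c^\infty(\R^N)\subset D(A_p)$ and $C_c^\infty(\R^N)$ is dense in $L^p(\R^N)$, the domain is dense.

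\textbf{Half-plane in the resolvent set.} Theorem \ref{solv} gives that $\lambda-A_p$ is surjective from $D(A_p)$ onto $L^p(\R^N)$ whenever $\Rp\lambda>r_p$. Theorem \ref{agmon-comp} gives the bound $|\lambda|\|u\|_p\leq C\|\lambda u-Au\|_p$ on $D(A_p)$, which covers all such $\lambda$ because $\Rp\lambda>r_p$ forces $|\lambda|>r_p$ and $|\arg\lambda|<\pi/2$. This bound yields injectivity. It remains to check closedness of $A_p$, so that $\lambda$ lies in the resolvent set and not just that $\lambda-A_p$ is bijective. For this I use Proposition \ref{domain2} together with the estimate $\|u\|_{D(A_p)}\leq C(\|Au\|_p+\|u\|_p)$ proved there. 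Let $u_n\to u$ and $Au_n\to f$ in $L^p$. Then $(u_n)$ is Cauchy in $D(A_p)$, which is complete. Hence $u\in D(A_p)$ and $Au=f$, since convergence in $W^{4,p}$ and in the weighted norm $\|Vu\|_p$ passes to the limit.

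\textbf{Resolvent bound.} From the same estimate, $\|\lambda R(\lambda,A_p)\|\leq C$ uniformly for $\Rp\lambda>r_p$. Shifting to $A_p-r_p$ turns this into the standard sectoriality estimate on the right half-plane. The generation theorem then applies.

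The only delicate point is to read the statements of Theorems \ref{agmon-comp} and \ref{solv} as covering the whole half-plane $\Rp\lambda>r_p$. The first is stated for $\lambda=re^{i\theta}$ with $|\theta|\leq\pi/2$ and $r>r_p$, which contains that half-plane, so both results apply there with the same constant.
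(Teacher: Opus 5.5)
Your proposal is correct and follows the paper's route: the paper's proof combines the sectoriality estimate of Theorem \ref{agmon-comp} with the solvability result of Theorem \ref{solv} on the half-plane $\Rp\lambda>r_p$ and then invokes \cite[Ch. II, Theorem 4.6]{eng-nag}. Your explicit checks of density and closedness are valid additions the paper leaves implicit; the closedness check via Proposition \ref{domain2} is even redundant, since an operator for which $\lambda-A_p$ is a bijection with bounded inverse is automatically closed.
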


From the sectoriality and the interpolation estimates, we deduce the following result.
\begin{coro}\label{agmon-comp-2}
Let  $1<p<\infty$ and assume that $V$ satisfies (\ref{gradient}). Then there exist $r_p>0$ and $C=C(N,p)$ such that  for every  $\Rp\lambda>r_p$ and $u\in D(A_p)$
\begin{align*}
&|\lambda|\|u\|_p+|\lambda|^\frac{3}{4}\|Du\|_{p}+|\lambda|^\frac{1}{2}\|D^{2}u\|_{p}+|\lambda|^\frac{1}{4}\|D^{3}u\|_{p}+ \|D^4u\|_p+\\&\quad\quad\quad\|V^\frac{3}{4}Du\|+\|V^\frac{1}{2}D^2u\|_p+\|V^\frac{1}{4}D^3u\|_p+\|Vu\|_p\leq C \|\lambda u-Au\|_p.
\end{align*}
\end{coro}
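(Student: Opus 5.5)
The proof only has to chain together estimates already established. Fix $\Rp\lambda>r_p$, with $r_p$ as in Theorems \ref{agmon-comp} and \ref{solv}, and take $u\in D(A_p)$. Put $f=\lambda u-Au$.

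\emph{Step 1: the term $|\lambda|\|u\|_p$.} The resolvent estimate of Theorem \ref{agmon-comp} covers $\lambda=re^{i\theta}$ with $|\theta|\le\pi/2$ and $r$ large. This contains the half plane $\Rp\lambda>r_p$ once $r_p$ is enlarged if needed. Hence
$$|\lambda|\|u\|_p\le C\|f\|_p .$$

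\emph{Step 2: the terms $\|\Delta^2u\|_p$ and $\|Vu\|_p$.} Proposition \ref{domain} holds on $D(A_p)$ by density and gives
$$\|\Delta^2u\|_p+\|Vu\|_p\le C\|Au\|_p\le C\big(\|f\|_p+|\lambda|\|u\|_p\big)\le C\|f\|_p .$$

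\emph{Step 3: the unweighted derivatives.} The Calder\'on--Zygmund estimate gives $\|D^4u\|_p\le C\|\Delta^2u\|_p$ for $1<p<\infty$; it can also be read off from the domain characterization in Proposition \ref{domain2}. For $h=1,2,3$ we use the classical interpolation inequality
$$\|D^hu\|_p\le \varepsilon^{4-h}\|\Delta^2u\|_p+C\varepsilon^{-h}\|u\|_p,$$
as in Lemma \ref{interp-bila} and its analogue for $h=2$, with the choice $\varepsilon=|\lambda|^{-1/4}$. This yields
$$|\lambda|^{1-h/4}\|D^hu\|_p\le C\big(\|\Delta^2u\|_p+|\lambda|\|u\|_p\big),$$
and the right-hand side is at most $C\|f\|_p$ by Steps 1 and 2.

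\emph{Step 4: the weighted derivatives.} Lemma \ref{interpolation} extends from $C_c^\infty(\R^N)$ to $D(A_p)$ by the density lemma. Applied with $\varepsilon=1$, it gives
$$\|V^{\frac34}Du\|_p+\|V^{\frac12}D^2u\|_p+\|V^{\frac14}D^3u\|_p\le C\big(\|\Delta^2u\|_p+\|Vu\|_p\big)\le C\|f\|_p .$$
Adding the bounds from Steps 1--4 proves the corollary.

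The only point needing care is the extension of Proposition \ref{domain} and Lemma \ref{interpolation} to all of $D(A_p)$. Take $u_n\in C_c^\infty(\R^N)$ with $u_n\to u$ in the norm of $D(A_p)$. Then $\Delta^2u_n$, $Vu_n$ and $D^hu_n$ converge in $L^p$. The weighted terms $V^{k/4}D^{4-k}u_n$ are Cauchy in $L^p$, because Lemma \ref{interpolation} applied to $u_n-u_m$ bounds their differences. They also converge almost everywhere along a subsequence to $V^{k/4}D^{4-k}u$, so Fatou's lemma lets us pass to the limit.
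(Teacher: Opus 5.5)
Your proposal is correct and follows the same route as the paper, whose proof simply invokes Lemma~\ref{interpolation}, Theorem~\ref{agmon-comp}, Proposition~\ref{domain} and the classical interpolation estimates. You have made that chain explicit: the choices $\varepsilon=|\lambda|^{-1/4}$ and $\varepsilon=1$, the Calder\'on--Zygmund bound for $D^4u$, and the density/Fatou extension to $D(A_p)$.
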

\begin{proof} The proof follows by Lemma \ref{interpolation}, Theorem \ref{agmon-comp},  Proposition \ref{domain} 
 and the classical interpolation estimates.\end{proof}

% \begin{remark}
% We observe   that, by perturbation techniques, one can recover the same generation result for a more general operator containing lower order terms with unbounded coefficients satisfying growth bounds. Indeed, if the $h$ order coefficients, say $b_h(x)$,  satisfy  for every $h=1,2,3$
% \[|b_h(x)|\leq k a(x)^{\frac{h}{2}}\qquad\forall\,x\in\R^N\]
% for a positive constant $k$, then the resulting operator is, by Lemma \ref{interpolation}, $A$-bounded and one can apply \cite[Theorem 2.10]{eng-nag}.
% \end{remark}

\section{Generation results in continuous functions spaces}

 In this section  we use Stewart-Masuda localization technique to  prove generation results in the space $C_0(\R^N)$ of continuous functions going to zero as $|x|$ goes to infinity. 
 %To this purpose we assume   that there exist   some positive constants $\nu$ and $c$ such that
% \begin{equation} \label{gradient-infty}\tag{$H_{\infty}$}
% a  \in C^1(\R^N),\quad a(x)\geq \nu>0\ {\rm for\ all}\ x\in\R^N,\quad |D a(x)|\leq c a(x)^\frac{1}{2}.
% \end{equation}
We endow the operator $A$ in $C_0(\R^N)$ with the following domain
$$D(A_{\infty})=\{u\in C_0(\R^N)\cap W^{4,q}_{\rm loc}(\R^N)\ \forall\ q<\infty:\ Vu,\   \Delta^2 u\in C_0(\R^N)\} $$ and

$$
\|u\|_{D(A_{\infty})}=\|u\|_{\infty}+\|V u\|_{\infty}+ \|\Delta^2  u\|_{\infty}.$$

We recall the  Morrey embedding, cf. \cite[Lemma 2.1]{bro-opi}. 
\begin{lemma}[Morrey] If $p>N$, then $W^{1,p}(\R^{N})$ is continuously embedded in $C_{b}(\R^{N})$. 
Moreover
\begin{equation}\label{morrey}
\|g\|_{L^{\infty} (B_{R}(x_{0})) }\leq CR^{-\frac Np}\left( \|g\|_{L^{p} (B_{R}(x_{0}))}+R\|Dg\|_{L^{p} (B_{R}(x_{0}))} \right)
\end{equation}
for every $g\in W^{1,p}_{\rm loc}(\R^{N})$, $x_{0}\in \R^N$ and $R>0$.	
\end{lemma}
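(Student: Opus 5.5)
The statement to prove is the Morrey lemma: the embedding $W^{1,p}(\R^N)\hookrightarrow C_b(\R^N)$ for $p>N$, together with the local estimate \eqref{morrey} with a constant $C=C(N,p)$ independent of $x_0$ and $R$. The plan is to prove \eqref{morrey} first on the unit ball and then obtain every ball by scaling and translation. The global embedding will then follow by taking $R=1$ and a supremum over $x_0$; continuity of the representative will come from the Hölder estimate produced along the way.

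\textbf{Step 1 (the case $R=1$, $x_0=0$).} By density of smooth functions in $W^{1,p}(B_1)$, it suffices to treat $g\in C^1(\overline{B_1})$. The classical Morrey argument gives, for any $x\in B_1$ and any ball $B\subset B_1$ of radius $r$ containing $x$,
$$|g(x)-g_B|\leq C\int_B\frac{|Dg(y)|}{|x-y|^{N-1}}\,dy,$$
where $g_B$ is the mean of $g$ over $B$. This comes from integrating $g(x)-g(y)=-\int_0^1 Dg(x+t(y-x))\cdot(y-x)\,dt$ over $y\in B$ and changing variables in polar coordinates. We apply it with $B=B_1$, which is convex and contains $x$.

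Next we use Hölder's inequality with exponents $p$ and $p'$. Since $(N-1)p'<N$ exactly when $p>N$, the kernel $|x-y|^{-(N-1)}$ lies in $L^{p'}(B_1)$, uniformly in $x$. This yields
$$|g(x)|\leq |g_{B_1}|+C\|Dg\|_{L^p(B_1)}\leq C\bigl(\|g\|_{L^p(B_1)}+\|Dg\|_{L^p(B_1)}\bigr).$$
Applying the same argument to the smaller convex set $B_1\cap B_{|x-z|}(\cdot)$ gives the Hölder bound $|g(x)-g(z)|\leq C|x-z|^{1-N/p}\|Dg\|_p$. This bound passes to limits, so it shows that the $W^{1,p}$ limit has a continuous representative.

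\textbf{Step 2 (scaling).} Given $g\in W^{1,p}_{\rm loc}$, $x_0\in\R^N$ and $R>0$, set $h(y)=g(x_0+Ry)$. Then
$$\|h\|_{L^p(B_1)}=R^{-N/p}\|g\|_{L^p(B_R(x_0))},\qquad \|Dh\|_{L^p(B_1)}=R^{1-N/p}\|Dg\|_{L^p(B_R(x_0))},$$
while $\|h\|_{L^\infty(B_1)}=\|g\|_{L^\infty(B_R(x_0))}$. Substituting these into Step 1 gives exactly \eqref{morrey}, with the same constant $C$.

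\textbf{Step 3 (global embedding).} Taking $R=1$ and the supremum over $x_0$ gives $\|g\|_\infty\leq C\|g\|_{W^{1,p}}$. Together with the uniform Hölder continuity from Step 1, this shows $g\in C_b(\R^N)$.

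The only genuinely delicate point is the potential estimate in Step 1, specifically checking the polar-coordinate change of variables and the integrability condition $(N-1)p'<N$. Everything else is routine scaling. Alternatively, one can simply cite \cite[Lemma 2.1]{bro-opi} and the standard proof in Gilbarg–Trudinger.
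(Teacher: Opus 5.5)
Your proof is correct, but it is not the paper's route, because the paper proves nothing here: it quotes the lemma from \cite[Lemma 2.1]{bro-opi}, which works in a more general weighted-Sobolev setting. You instead give the classical self-contained argument:
\begin{itemize}
\item the potential estimate $|g(x)-g_{B_1}|\leq C\int_{B_1}|Dg(y)|\,|x-y|^{1-N}\,dy$ on the unit ball;
\item H\"older's inequality, using that $(N-1)p'<N$ exactly when $p>N$;
\item dilation and translation to reach an arbitrary ball $B_R(x_0)$.
\end{itemize}

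What your route buys is transparency on the one feature the paper really uses. The scaling step shows that the constant in \eqref{morrey} depends only on $N$ and $p$, not on $x_0$ or $R$. This matters in Proposition \ref{prop:infty}, where the estimate is applied with $R=|\lambda|^{-1/4}$ varying with $\lambda$.

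Two minor remarks.
\begin{itemize}
\item Your H\"older-continuity step is loosely phrased. The convex set must contain both $x$ and $z$ and have measure comparable to $|x-z|^N$; a clean choice is $W=B_1\cap B_r\bigl(\tfrac{x+z}{2}\bigr)$ with $r=|x-z|$. You do not actually need this step, though. The sup bound applied to differences of $C^1(\overline{B_1})$ approximants already makes them uniformly Cauchy, so the $W^{1,p}$ limit has a continuous representative.
\item In Step 3, the global continuous representative comes from the local ones on the balls $B_1(x_0)$. These agree on overlaps, since two continuous functions that coincide a.e.\ on an open set coincide everywhere there.
\end{itemize}
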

	
We deduce the following estimates.	
\begin{proposition}\label{prop:infty}
Let $p>N$ and assume that $V$ satisfies (\ref{gradient}). Then there exists $ r_p>0$ such that for every  $\Rp\lambda>r_p$ and
$u\in C^{3}_{0}(\R^{N})\cap W^{4,p}_{\rm loc}(\R^{N})$ with $Au \in C_{0}(\R^{N})$, it holds
\begin{align*} 
 \sum_{h=0}^{3}|\lambda|^{1-\frac h4} \|D^{h}u\|_{\infty} 
	+\sup_{x\in \R^{N}}  |\lambda|^{\frac {N}{4p}} \|D^{4}u\|_{ L^{p}(B_{R }(x))  }\leq C\|\lambda u-Au\|_{\infty}
\end{align*}
where $R=|\lambda |^{-\frac 14}$.
\end{proposition}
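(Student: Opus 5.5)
The plan is the Stewart--Masuda localization: combine the uniform $L^p$ resolvent estimate of Corollary \ref{agmon-comp-2} on balls of radius $R=|\lambda|^{-1/4}$ with the Morrey inequality \eqref{morrey}, and absorb the resulting error terms. Fix $\Rp\lambda>r_p$ with $r_p$ as in Corollary \ref{agmon-comp-2} (possibly enlarged), set $R=|\lambda|^{-1/4}$, and fix $x_0\in\R^N$. Take a cutoff $\vartheta\in C_c^\infty(\R^N)$ with $\vartheta=1$ on $B_R(x_0)$, $\vartheta=0$ outside $B_{2R}(x_0)$ and $|D^k\vartheta|\le CR^{-k}=C|\lambda|^{k/4}$. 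Then $\vartheta u\in W^{4,p}_{\rm comp}(\R^N)$, and by Remark \ref{supp-compatto} and density it lies in $D(A_p)$. Corollary \ref{agmon-comp-2} applied to $\vartheta u$ gives
\[
\sum_{h=0}^4|\lambda|^{1-\frac h4}\|D^h(\vartheta u)\|_p\le C\|\lambda\vartheta u-A(\vartheta u)\|_p\le C\|\vartheta(\lambda u-Au)\|_p+C\sum_{k=1}^4|\lambda|^{\frac k4}\|D^{4-k}u\|_{L^p(B_{2R}(x_0))}.
\]
Every $L^p$ norm on the right is over a ball of volume $CR^N=C|\lambda|^{-N/4}$, so we bound it by $C|\lambda|^{-\frac N{4p}}$ times the corresponding sup norm:
\[
\sum_{h=0}^4|\lambda|^{1-\frac h4}\|D^h u\|_{p,R}\le C|\lambda|^{-\frac N{4p}}\Big(\|\lambda u-Au\|_\infty+\sum_{j=0}^3|\lambda|^{1-\frac j4}\|D^ju\|_\infty\Big).
\]

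Next apply \eqref{morrey} to $g=D^hu$ for $h=0,\dots,3$ on $B_R(x_0)$:
\[
|\lambda|^{1-\frac h4}\|D^hu\|_{L^\infty(B_R(x_0))}\le C|\lambda|^{\frac N{4p}}\big(|\lambda|^{1-\frac h4}\|D^hu\|_{p,R}+|\lambda|^{1-\frac{h+1}4}\|D^{h+1}u\|_{p,R}\big).
\]
Combining this with the previous display, the factor $|\lambda|^{N/4p}$ cancels. Taking the sup over $x_0$ and writing $M=\sum_{h=0}^3|\lambda|^{1-h/4}\|D^hu\|_\infty$, we get
\[
M+\sup_{x_0}|\lambda|^{\frac N{4p}}\|D^4u\|_{p,R}\le C\|\lambda u-Au\|_\infty+C_1M.
\]

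The main obstacle is this last inequality, because the constant $C_1$ in front of $M$ is not small and so cannot be absorbed directly. I would use the standard Stewart trick. Choose the cutoff on the scale $R=\delta|\lambda|^{-1/4}$ with a small parameter $\delta$. In the commutator terms $R^{-k}\|D^{4-k}u\|_{p,2R}$ the derivative bounds then worsen by $\delta^{-k}$. However, on the left we only keep the terms needed for Morrey, and the ball volume contributes an extra $R^{N/p}$, which after passing from $B_{2R}$ to $B_R$ yields a factor $2^{N/p}$, not a small one.

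A cleaner route is to take the cutoff radius $R'$ larger than the Morrey radius: localize at radius $R'=K|\lambda|^{-1/4}$ and apply Morrey on $B_{R}$ with $R=|\lambda|^{-1/4}$. Then the commutator terms carry the small factors $R'^{-k}|\lambda|^{-k/4}\cdot$(weights) $=K^{-k}$, while the volume factor becomes $(R'/R)^{N/p}=K^{N/p}$. The net factor is $K^{N/p-1}$, which is small for large $K$ because $p>N$. This absorbs $C_1M$ into the left side, and we conclude using $M<\infty$, which holds since $u\in C^3_0(\R^N)$.

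With that in hand, the $D^4$ term is obtained from the same localized estimate at $h=4$. Finally, enlarge $r_p$ if necessary so that the condition $\Rp\lambda>r_p$ from Corollary \ref{agmon-comp-2} is satisfied throughout.
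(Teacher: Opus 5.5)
Your final argument is correct and is essentially the paper's proof: localize with a cutoff at radius $K|\lambda|^{-1/4}$, apply Morrey on $B_{|\lambda|^{-1/4}}(x_0)$, and absorb the error using the net factor $K^{N/p-1}$, which is small since $p>N$, together with $M<\infty$ from $u\in C^3_0(\R^N)$; the paper does the same with a cutoff supported in $B_{R(1+\delta)}(x_0)$, choosing $\delta_0>1$ so that $C(1+\delta_0)^{N/p}/\delta_0\le\frac14$. The small-$\delta$ attempt you discard along the way plays no role and can simply be deleted.
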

\begin{proof}
Let $u$ be as in the hypotheses of the proposition,   $\Rp\lambda>r_p$ as in Corollary \ref{agmon-comp-2} and let $\delta>0$ be a constant which   will be chosen later.
Set $R={|\lambda |^{-\frac 14}}$.
Let $x_0\in\R^N$, $\vartheta \in C_{c}^{\infty}(\R^{N})$ be such that $0\leq \vartheta \leq 1$, $\vartheta =1$ in $B_{R}(x_{0})$, $\vartheta=0$ in
$B^{c}_{R(1+\delta)} (x_{0})$ and $|D^{h}\vartheta|\leq {C}{R^{-h}\delta ^{-h}} $ for $h=1,2,3,4$  for some positive constant $C$ which does not depend on $x_{0}$ and $\delta.$ 
% Since $|D V^{-\frac{1}{4}}|\leq \frac{\gamma}{4}$ 
% we have that
% \[
% \left(1-\frac{\gamma(1+\delta)}{4|\lambda|^\frac{1}{4}}\right)V(x)^\frac{1}{4}\leq V(x_0)^{\frac 14}\leq \left(1+\frac{\gamma(1+\delta)}{4|\lambda|^\frac{1}{4}}\right)V(x)^\frac{1}{4}
% \text{ for every }x\in B_{R(1+\delta)}(x_{0})
% \]
%For $h=1,2,3,4$
% \[
% a(x)^{\frac h2}\leq \kappa^{h}a(x_{0})^{\frac h2} \text{ for every }x\in b_{r(1+\delta)}(x_{0});
% \] 
% moreover, choosing $|\lambda|\geq c^{4}$ we have
% \[
% a(x_{0})^{\frac 12}\leq \left(1-\frac{c}{2}\frac{1}{|\lambda|^{\frac 14}}\right)^{-1}a(x)^{\frac{1}{2}}\leq 2 a(x)^{\frac{1}{2}}
% \text{ for every }x\in b_{r}(x_{0})
% \]
% and then for $h=1,2,3,4$
% \[
% a(x_{0})^{\frac h2}\leq 2^{h} a(x)^{\frac{h}{2}}\text{ for every }x\in B_{R}(x_{0}).
% \]
Now for $h=0,1,2,3$ from \eqref{morrey} we have
\begin{align*}
 |\lambda| ^{1-\frac{h}{4}}|D^{h}u(x_{0})|
	&\leq CR^{-\frac Np}\left( |\lambda| ^{1-\frac{h}{4}} \|D^{h}u\|_{p,R} 
	+ |\lambda| ^{1-\frac{h}{4}} R\|D^{h+1}u\|_{p,R} \right) \\
&  \leq CR^{-\frac Np}\left( |\lambda| ^{1-\frac{h}{4}} \|D^{h}u\|_{p,R} 
	+ |\lambda| ^{1-\frac{h+1}{4}} \|D^{h+1}u\|_{p,R} \right). 	\end{align*}
Summing over $h=0,1,2,3$ we have
\begin{equation*}
\sum_{h=0}^{3}|\lambda| ^{1-\frac{h}{4}}|D^{h}u(x_{0})|
	\leq CR^{-\frac Np}\sum_{h=0}^{4}  |\lambda| ^{1-\frac{h}{4}} \| D^{h}u\|_{p,R} 
\end{equation*}
and also
\begin{equation}\label{eq:sum-lifnty1}
\sum_{h=0}^{3}|\lambda| ^{1-\frac{h}{4}}|D^{h}u(x_{0})|
		+|\lambda |^{\frac {N}{4p}}\|D^{4}u\|_{p,R}
	\leq CR^{-\frac Np}\sum_{h=0}^{4}  |\lambda| ^{1-\frac{h}{4}} \|D^{h}u\|_{p,R}.
\end{equation}
Now, setting $v=\vartheta u$, from Corollary \ref{agmon-comp-2} we have
\begin{align*}
 \sum_{h=0}^{4}  |\lambda| ^{1-\frac{h}{4}} \| D^{h}u\|_{p,R}&\leq 
	\sum_{h=0}^{4}  |\lambda| ^{1-\frac{h}{4}} \| D^{h}v\|_{p,R(1+\delta)}\\
&  \leq C\|\lambda v-Av\|_{p}\\
&  \leq C\left( \| \lambda u-Au\|_{p,R(1+\delta)} 
	+\sum _{h=0}^{3}\|D^{h}uD^{4-h}\vartheta\|_{p,R(1+\delta)} \right)\\
& \leq C\left( \| \lambda u-Au\|_{p,R(1+\delta)} 
	+\sum _{h=0}^{3} \frac{1}{R^{4-h}\delta ^{4-h}}    \|D^{h}u\|_{p,R(1+\delta)} \right)\\
&  \leq C\left( \| \lambda u-Au\|_{p,R(1+\delta)} 
	+\sum _{h=0}^{3} \frac{1}{\delta ^{4-h}}|\lambda |^{1-\frac h4}     \|D^{h}u\|_{p,R(1+\delta)} \right)\\
&  \leq C(1+\delta)^{\frac Np} R^{\frac Np}\left( \| \lambda u-Au\|_{\infty} 
	+\sum _{h=0}^{3} \frac{1}{\delta ^{4-h}}|\lambda |^{1-\frac h4}     \|D^{h}u\|_{\infty} \right).
\end{align*}
Then, by \eqref{eq:sum-lifnty1}, we have
\begin{align*}
\sum_{h=0}^{3}|\lambda| ^{1-\frac{h}{4}}&|D^{h}u(x_{0})|
		+|\lambda |^{\frac {N}{4p}}\|D^{4}u\|_{p,R} \\
&\quad \leq C(1+\delta)^{\frac Np} \left( \| \lambda u-Au\|_{\infty} 
	+\sum _{h=0}^{3} \frac{1}{\delta ^{4-h}}|\lambda |^{1-\frac h4}     \|D^{h}u\|_{\infty} \right).
\end{align*}
Taking the supremum over $x_{0}\in \R^{N}$ we have
\begin{align*}
\sum_{h=0}^{3}|\lambda| ^{1-\frac{h}{4}}&\|D^{h}u\|_{\infty}
		+\sup_{x_{0}\in \R^{N}}|\lambda |^{\frac {N}{4p}}\|D^{4}u\|_{p,R} \\
&\quad \leq C(1+\delta)^{\frac Np} \left( \| \lambda u-Au\|_{\infty} 
	+\sum _{h=0}^{3} \frac{1}{\delta ^{4-h}}|\lambda |^{1-\frac {h}{4}}     \|D^{h}u\|_{\infty} \right).
\end{align*}
After choosing $\delta_{0}>1$ such that $C\frac{(1+\delta_{0})^{\frac Np}}{\delta_{0}}\leq \frac 14$, the claim follows.

\end{proof}
We finally deduce the  generation result in $C_0(\R^N)$.
\begin{theorem}\label{generatcb}
Assume that $V$ satisfies (\ref{gradient}). Then $(A,D(A_{0}))$ generates an analytic semigroup in 
$C_{0}(\R^N)$.
\end{theorem}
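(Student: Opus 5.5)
The argument follows Stewart--Masuda: prove sectoriality in $C_0(\R^N)$ through the estimate of Proposition \ref{prop:infty}, obtain solvability of the resolvent equation from the $L^p$ theory with $p>N$, and then invoke the classical generation theorem. Here $D(A_0)$ is read as $D(A_\infty)$.

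\emph{Step 1 (resolvent estimate).} Fix $p>N$ and $\Rp\lambda>r_p$. For $u\in D(A_\infty)$ I first check that $u\in C^3_0(\R^N)\cap W^{4,p}_{\rm loc}(\R^N)$. Local $W^{4,q}$ regularity for all $q<\infty$ together with Morrey's embedding gives $u\in C^3$. That $D^hu$ vanishes at infinity I would obtain by applying the local estimate \eqref{eq:cover-max} on unit balls, combined with \eqref{morrey}: this bounds $|D^hu(x_0)|$ by $\|Au\|_{L^\infty(B_1(x_0))}+\|u\|_{L^\infty(B_1(x_0))}$, and both terms tend to $0$ as $|x_0|\to\infty$. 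Proposition \ref{prop:infty} then gives $|\lambda|\|u\|_\infty\le C\|\lambda u-Au\|_\infty$. In particular $\lambda-A$ is injective on $D(A_\infty)$.

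\emph{Step 2 (existence).} Take $f\in C_0(\R^N)$. I approximate $f$ uniformly by $f_n\in C_c^\infty(\R^N)\subset L^p\cap C_0$. By Theorem \ref{solv} there are $u_n\in D(A_p)$ with $\lambda u_n-Au_n=f_n$. Since $p>N$ and $u_n\in W^{4,p}$, we have $u_n\in C^3_0(\R^N)$, and $Au_n=\lambda u_n-f_n\in C_0$. Proposition \ref{prop:infty} therefore applies to $u_n$ and to the differences $u_n-u_m$, which gives
$$\sum_{h\le 3}\|D^h(u_n-u_m)\|_\infty+\sup_x\|D^4(u_n-u_m)\|_{L^p(B_R(x))}\le C\|f_n-f_m\|_\infty.$$
So $(u_n)$ is Cauchy in $C^3_b$ with locally uniform $W^{4,p}$ control, and converges to some $u\in C^3_0$ with $u\in W^{4,p}_{\rm loc}$. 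The equation passes to the limit locally, giving $\lambda u+\Delta^2u+Vu=f$.

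\emph{Step 3 (domain membership).} I must show $u\in D(A_\infty)$, i.e.\ $Vu,\ \Delta^2u\in C_0$. This is the main obstacle, since Proposition \ref{prop:infty} does not control $Vu$ in sup norm. My plan is to apply the a priori estimate of Proposition \ref{domain} with $p=\infty$ to $u_n-u_m$. That estimate is proved on test functions without covering, and I would extend it to $D(A_p)$ functions lying in $C^3_0$ by cutoff and mollification. It gives $\|V(u_n-u_m)\|_\infty+\|\Delta^2(u_n-u_m)\|_\infty\le C\|A(u_n-u_m)\|_\infty\le C(|\lambda|+1)\|u_n-u_m\|_\infty+C\|f_n-f_m\|_\infty$. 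Hence $Vu_n\to Vu$ uniformly. If each $Vu_n\in C_0$, the limit $Vu$ is in $C_0$ and so is $\Delta^2u=f-\lambda u-Vu$.

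To show $Vu_n\in C_0$, I would use $Vu_n=f_n-\lambda u_n-\Delta^2u_n$. Here $f_n$ has compact support and $u_n\in C_0$, while $\Delta^2u_n=\lambda u_n-Vu_n-f_n$ is continuous because $V$ is comparable to the continuous $U$. Decay of $Vu_n$ at infinity would follow from the weighted local $L^p$ estimates combined with Morrey on balls of radius $U^{-1/4}$. If this is too delicate, I would fall back on the approximations $V_\sigma$ from Theorem \ref{solv}, whose uniform bounds are independent of $\sigma$.

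\emph{Step 4 (generation).} Steps 1--3 show that $A$ is sectorial on the half plane $\Rp\lambda>r_p$ with resolvent bound $C/|\lambda|$. Density of $D(A_\infty)$ in $C_0$ holds because it contains $C_c^\infty$. Closedness follows from the a priori estimate, since $\|u\|_{D(A_\infty)}\le C(\|Au\|_\infty+\|u\|_\infty)$. By \cite[Ch. II, Theorem 4.6]{eng-nag}, $A$ generates an analytic semigroup on $C_0(\R^N)$.
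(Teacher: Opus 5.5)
Your Steps 1, 2 and 4 are fine and match the paper. Step 1 even supplies a point the paper leaves implicit: every $u\in D(A_\infty)$ lies in $C^3_0(\R^N)\cap W^{4,p}_{\rm loc}(\R^N)$, so Proposition \ref{prop:infty} gives the resolvent bound on the whole domain.

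The gap is Step 3. You never prove that $Vu$ vanishes at infinity, and this is the one new point needed for $u\in D(A_\infty)$. The uniform convergence $Vu_n\to Vu$ only shifts the problem to showing $Vu_n\in C_0(\R^N)$. There you list two strategies without carrying either out.
\begin{itemize}
\item The first does not close as sketched. Morrey's inequality on balls of radius $\rho=U(x_0)^{-1/4}$ puts a factor $\rho^{-N/p}$ in front of the local $L^p$ norms of $Vu_n$ and $V^{3/4}Du_n$. This factor blows up where $V\to\infty$, and $Vu_n\in L^p(\R^N)$ gives no rate of decay for those local norms.
\item The $V_\sigma$ fallback is not an argument.
\end{itemize}
Your continuity argument is also circular: you derive continuity of $\Delta^2u_n$ from that of $Vu_n$ and vice versa. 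Moreover, $U/c_0\le V\le c_0U$ says nothing about continuity of $V$. Continuity of $Vu$ is immediate only when $V$ itself is continuous, which the statement tacitly requires anyway (otherwise $C_c^\infty(\R^N)\not\subset D(A_\infty)$).

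The missing idea is to localize at a \emph{fixed} scale, exactly as you did in Step 1 for $D^hu$. Fix $R>0$ and $x_0\in\R^N$. Take a cutoff $\eta$ with $\eta=1$ on $B_R(x_0)$, $\eta=0$ outside $B_{2R}(x_0)$ and $|D^k\eta|\le CR^{-k}$. Apply the $L^\infty$ a priori estimate of Proposition \ref{domain} to the compactly supported function $\eta u$. The difference $A(\eta u)-\eta(\lambda u-f)$ is a combination of products $D^k\eta\,D^{4-k}u$ with $1\le k\le 4$. So only derivatives of $u$ of order at most three appear, and
\[
|V(x_0)u(x_0)|\le \|V\eta u\|_\infty\le C\Big(\|f\|_{L^\infty(B_{2R}(x_0))}+|\lambda|\,\|u\|_{L^\infty(B_{2R}(x_0))}+\sum_{k=1}^{4}R^{-k}\|D^{4-k}u\|_{L^\infty(B_{2R}(x_0))}\Big).
\]
The right-hand side tends to $0$ as $|x_0|\to\infty$, because $f\in C_0(\R^N)$ and, by your Step 2, $D^hu\in C_0(\R^N)$ for $h\le 3$. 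This gives $Vu\in C_0(\R^N)$ directly, with no Cauchy argument for $Vu_n$. Then $\Delta^2u=f-\lambda u-Vu\in C_0(\R^N)$ by difference. Finally, $u\in W^{4,q}_{\rm loc}(\R^N)$ for every $q<\infty$ by local elliptic regularity. This is how the paper concludes $u\in D(A_\infty)$.
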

\begin{proof}
% We prove that $(A_{0},D(A_{0}))$ is sectorial.
 Since we have the sectoriality estimate, we prove that the resolvent set is non empty.
Let $f\in C_{0}(\R^{N})$ and $f_{n}=\varphi_{n}f$ where $\varphi_{n}\in C_{c}^{\infty}(\R^{N})$ is such that 
$0\leq \varphi_{n}\leq 1$, $\varphi_{n}=1$, in $B_{n}$ and $\varphi_{n}=0$ in $B^{c}_{2n}$, so we have that $f_n$ converges uniformly to $f$ in $\R^N.$ 
Let $p>N$, since $f_{n}\in L^{p}(\R^{N})$ then, by Theorem \ref{solv}, for $Re\lambda > r_{p} $
there exists a unique $u_{n}\in D(A_{p})$ such that
\begin{equation}\label{eq:solvn}
\lambda u_{n}-Au_{n}=f_{n}.
\end{equation}
 We have, in particular, $u_{n}\in W^{4,p}(\R^{N})$ 
 and hence $D^hu_n\in C_0(\R^N)$ for $h=0, 1,2,3$ by Sobolev embedding theorem. 

By Proposition \ref{prop:infty} applied to $u_n-u_m$ we obtain
\begin{align*}
 \sum_{h=0}^{3}&|\lambda|^{1-\frac h4} \|D^{h}(u_{n}-u_m)\|_{\infty}
 +\sup _{x\in \R^N}\|D^4(u_n-u_m)\|_{L^p(B_{|\lambda|^{-1/4}}(x))}
 \leq C\|f_{n}-f_m\|_{\infty}.\label{eq:stima-infty}
\end{align*}
Then $\{u_{n}\}$ is a Cauchy sequence in $C_0^{3}(\R^{N})\cap W^{4,p}_{\rm loc}(\R^{N})$, therefore it converges to a function $u\in C^{3}_{0}(\R^{N})\cap W^{4,p}_{\rm loc}(\R^{N})$, $D^hu_n$ converges uniformly on $\R^N$ to $D^hu$ for $h=0,1,2,3$ and $D^4u_n$ converges locally in $L^p(\R^N)$ to $D^4u$. 
Taking the limit in \eqref{eq:solvn} we have that $u$ solves weakly the equation $\lambda u-Au=f$.
We now show that $Vu\in C_0(\R^N)$. Take $x_0\in\R^N$, $R>0$, $\eta\in C^\infty(\R^N)$ such that $0\leq \eta\leq 1$, $\eta=1$ in $B_R(x_0)$, $\eta=0$ outside $B_{2R}(x_0)$, $|D^k\eta|\leq CR^{-k}$ for $k=1,2,3,4$. Then $\eta u\in D(A_{0})$ and, by Proposition \ref{domain},
$$|V(x_0)u(x_0)|\leq \|V\eta u\|_\infty\leq C\left(\|\eta f\|_{\infty, 2R}+\sum_{k=1}^3\frac{1}{R^k}\|D^{4-k}u\|_{\infty,2R}
\right).
$$
Since $f\in C_0(\R^N),\ D^{h}u\in C_{0}(\R^N)$ for $h=0,1,2,3$, we deduce $Vu\in C_0(\R^N)$ and, by difference, also $\Delta^2u\in C_0(\R^N)$.
Finally, since $f\in L^{q}_{\rm loc}(\R^{N})$ for every $q>1$, by local elliptic regularity, $u\in W^{4,q}_{\rm loc}(\R^{N})$for every $q>1$ and then $u\in D(A_{0}).$
The clam follows by classical generation results.
\end{proof}

\section{Generation results in  \texorpdfstring{$L^1(\R^N)$}{L1(RN)}}

We consider the operator $A=-\Delta^{2}-V$ endowed with the domain
$$
D(A_1)
=\{u\in  L^1(\R^N):
\ \ Vu\in L^1(\R^N),\ \Delta ^2 u\in L^1(\R^N)\}.
$$
By \cite[Theorem 5.8 (i)]{tan}, $D(A_1)$ continuously embeds into $W^{3,p}(\R^N)$ for every $p\in \left[1,\frac{N}{N-1}\right[$ and, for every $u\in D(A_1)$,
\begin{equation*}\label{embT}
\|u\|_{W^{3,p}}\leq C_p(\|u\|_1+\|\Delta^2 u\|_1).
\end{equation*}
By Proposition \ref{domain} we have the following a priori estimates.
\begin{proposition} \label{domain-uno}
Let  $p=1$ and assume that $V$ satisfies (\ref{gradient}). Then, for $u\in D(A_1)$ it holds
$$\|\Delta^2  u\|_1+\|V u\|_1\leq C\|Au\|_1$$
where the  constant $C$ depends on $N$. 
\end{proposition}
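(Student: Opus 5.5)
The estimate is obtained in two steps. First I prove it for $u\in C_c^\infty(\R^N)$ by rerunning the localization argument of Proposition \ref{domain} with $p=1$. Then I pass to all of $D(A_1)$ by the density lemma.

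\emph{Smooth functions.} Fix $\delta$ and set $\rho(x)=\frac{4}{\gamma}((1+\delta)^{1/4}-1)U(x)^{-1/4}$. Fix $x_0$, put $R=\rho(x_0)$, and take the cutoff $\vartheta$ supported in $B_R(x_0)$ and equal to $1$ on $B_{R/2}(x_0)$.
\begin{itemize}
\item The frozen-coefficient step needs $V(x_0)\|\vartheta u\|_1\le C\|\Delta^2(\vartheta u)+V(x_0)\vartheta u\|_1$. This holds in $L^1$, since $-\Delta^2$ is sectorial in $L^1(\R^N)$ (as in \cite{lun}); equivalently, the kernel of $(\lambda+\Delta^2)^{-1}$ has $L^1$-norm $\le C/\lambda$ for $\lambda>0$ by the bounds of \cite{tan}.
\item The commutator terms $\|D^k\vartheta\, D^{4-k}u\|_1$ and the error $(V-V(x_0))\vartheta u$ are handled exactly as before, using \eqref{eq:Vx-Vx_0}.
\item Proposition \ref{pr:covering} lets us sum over the balls with finite overlap $\zeta$, which is legitimate since $p=1<\infty$.
\item Lemma \ref{interpolation}, valid for $p=1$, then absorbs $\gamma^k\|V^{k/4}D^{4-k}u\|_1$ when $\gamma$ is small and $c_0$ is close to $1$.
\end{itemize}
This gives $\|Vu\|_1\le C\|Au\|_1$. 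Since $\Delta^2u=-Au-Vu$, also $\|\Delta^2u\|_1\le C\|Au\|_1$. This is just the $p=1$ case already covered by Proposition \ref{domain}, so the smooth case requires only checking that each ingredient works in $L^1$.

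\emph{Extension to $D(A_1)$.} Let $u\in D(A_1)$. The density lemma gives $u_n\in C_c^\infty(\R^N)$ with $u_n\to u$, $Vu_n\to Vu$ and $\Delta^2u_n\to\Delta^2u$ in $L^1$. In particular $Au_n\to Au$ in $L^1$, so the inequality passes to the limit.

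The main obstacle is this density step in $L^1$. Truncating $u_n=\varphi_n u$ produces the terms $D^k\varphi_n D^{4-k}u$, which must tend to $0$ in $L^1$, so we need $D^ju\in L^1$ for $j\le3$. The embedding $D(A_1)\hookrightarrow W^{3,1}(\R^N)$ from \cite[Theorem 5.8]{tan} supplies this, and with $|D^k\varphi_n|\le Cn^{-k}$ those terms vanish. Next comes mollification of the truncated function $v$, which has compact support $K$. Here $V$ is only measurable, but it is bounded on compacts: $V\le c_0U$ with $U$ continuous. Hence $\|V(\rho_\varepsilon*v-v)\|_1\le \sup_{K'}V\,\|\rho_\varepsilon*v-v\|_1\to0$, where $K'$ is a fixed compact neighbourhood of $K$. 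The term $\Delta^2$ commutes with convolution. This completes the approximation, and with it the proof.
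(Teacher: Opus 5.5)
Your proposal is correct and follows the paper's route. The paper reads this off as the $p=1$ case of Proposition \ref{domain} (frozen-coefficient $L^1$ sectoriality of $\Delta^2$ on balls of radius $\sim U^{-1/4}$, covering, and absorption via Lemma \ref{interpolation}), extended to $D(A_1)$ by the density lemma. Your explicit $L^1$ density step, using the embedding $D(A_1)\hookrightarrow W^{3,1}(\R^N)$ from \cite{tan} for the truncation and the local boundedness of $V$ for the mollification, fills in what the paper dismisses with ``the same argument works''.
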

To investigate the generation of an analytic semigroup in $L^1(\R^N)$, we start by proving an a priori sectoriality estimate for the resolvent in a suitable sector of the complex plane. Then we prove the surjectivity.

\begin{theorem} \label{sectorial}
Assume that $V$ satisfies (\ref{gradient}). Then, there exists $\Sigma\subseteq\C$ such that, if $\lambda\in\Sigma$, the following inequality holds for every $u\in D(A_1)$
$$|\lambda|\|u\|_1\leq C\|\lambda u-Au\|_1$$
where $C=C(N)$.
 \end{theorem}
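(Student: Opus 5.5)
The plan is a duality argument that transfers the $C_0$ sectoriality to $L^1$. The $L^1$ estimate cannot be obtained by Agmon's trick, because Proposition \ref{a-prioriAux} is only available for $1<p<\infty$. We only need the estimate on a dense set: by the density lemma, $C_c^\infty(\R^N)$ is dense in $D(A_1)$ for the graph norm, and both sides of the inequality are continuous for that norm. So it suffices to prove $|\lambda|\|u\|_1\le C\|\lambda u-Au\|_1$ for $u\in C_c^\infty(\R^N)$ and $\lambda$ in $\Sigma:=\{\lambda:\ \Rp\lambda>r_p\}$. Here $p>N$ is fixed and $r_p$ is the threshold from Proposition \ref{prop:infty} and Theorem \ref{generatcb}.

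Fix such $u$ and $\lambda$. We write $\|u\|_1=\sup\{|\int u g\,dx| : g\in C_c^\infty(\R^N),\ \|g\|_\infty\le1\}$. For each such $g$, Theorem \ref{generatcb} and Proposition \ref{prop:infty} give $w=R(\lambda,A_\infty)g\in D(A_\infty)$. It satisfies $\lambda w-Aw=g$ and
$|\lambda|\|w\|_\infty\le C\|g\|_\infty\le C$.
Since $A$ is formally symmetric with real potential, and $u$ has compact support,
\[
\int u g\,dx=\int u(\lambda w-Aw)\,dx=\int(\lambda u-Au)\,w\,dx .
\]
Integrating $\Delta^2$ by parts is legitimate here. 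Indeed $w\in W^{4,q}_{\rm loc}$ for all $q$, and $u$ is smooth with compact support, so no boundary terms appear. Also $Vuw$ is integrable because $V$ is continuous up to the factor $c_0$ (it is locally bounded, being comparable to $U\in C^1$).

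From this identity we get $|\int ug|\le \|\lambda u-Au\|_1\|w\|_\infty\le \frac{C}{|\lambda|}\|\lambda u-Au\|_1$. Taking the supremum over $g$ gives the claim on $C_c^\infty$. The density lemma together with Proposition \ref{domain-uno} then extends it to $D(A_1)$. If $\lambda\in\Sigma$ is non-real, we apply the same argument with $\bar\lambda$ on the dual side; this needs no change because the sector for $A_\infty$ is symmetric under conjugation. Alternatively, we can replace $g$ by $\bar g$ and use $\overline{R(\lambda,A_\infty)\bar g}=R(\bar\lambda,A_\infty)g$.

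The main obstacle is justifying the duality identity. It requires that the resolvent of $A_\infty$ exists on the whole half-plane and obeys the uniform bound $\|R(\lambda,A_\infty)\|\le C/|\lambda|$ there, not just a priori estimates. This is exactly the content of Theorem \ref{generatcb} combined with Proposition \ref{prop:infty} (the $h=0$ term). The remaining points are the integration by parts against a function that lies only in $W^{4,q}_{\rm loc}$, and checking that the sector $\Sigma$ can be chosen as the half-plane $\Rp\lambda>r_p$, possibly enlarged to a sector by the standard perturbation of the resolvent.
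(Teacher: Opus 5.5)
Your proposal is correct and follows essentially the same route as the paper: test $u\in C_c^\infty(\R^N)$ against $(\lambda-A)w$ with $w=R(\lambda,A_\infty)g$, use the formal self-adjointness of $A$ to move $\lambda-A$ onto $u$, apply the $C_0$ resolvent bound $|\lambda|\|w\|_\infty\le C\|g\|_\infty$, and extend to $D(A_1)$ by density. Your remark about passing to $\bar\lambda$ is superfluous, since the pairing $\int uw\,dx$ is bilinear and $A$ has real coefficients, so no conjugation ever enters.
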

\begin{proof}
We use a duality argument to get the sectoriality estimate in $L^1$.
Observe that the operator coincides with its formal adjoint.
By the conditions \eqref{gradient} on $V$, the operator $A$ generates an analytic semigroup in $C_0(\R^N)$  (Theorem \ref{generatcb}). Therefore there exists a sector $\Sigma$ contained in the resolvent set of $A$ and
$$|\lambda|\|u\|_\infty\leq C\|\lambda u-Au\|_\infty$$ for every $\lambda\in\Sigma$, $u\in D(A_{\infty}) $ and for some constant $C$ independent of $\lambda$ and $u$. In view of the density of $C_c^\infty(\R^N)$ in $D(A_1)$, we prove the sectoriality estimate for functions in $C_c^\infty(\R^N)$.
Let $\lambda\in\Sigma$, $u\in C_c^\infty(\R^N)$. Then, since $(\lambda I-A)(D(A_{\infty}))=C_0(\R^N)$, we have
\begin{align*}
\|u\|_1&= \sup\left\{\int_{\R^N}u(\lambda I-A)v\, dx:\ v\in D(A_{\infty}),\ \|\lambda v-A v\|_\infty\leq 1\right\}\\
       &\leq \sup\left\{\int_{\R^N}u(\lambda I-A)v\, dx:\ v\in D(A_{\infty}),\ \|v\|_\infty\leq \frac{C}{|\lambda|}\right\}.
\end{align*}
Integrating by parts, we get
\begin{align}
\left|\int_{\R^N}u(\lambda I-A)v\right|= \left|\int_{\R^N}v(\lambda I-A)u\right|\leq \frac{C}{|\lambda|}\|\lambda u-Au\|_1.
\end{align}
\end{proof}

We finally prove, through an approximation procedure,  the solvability of the elliptic problem. 

\begin{theorem}\label{solv-1}
Assume that $V$ satisfies (\ref{gradient}). Then there exists $\Sigma\subseteq\C$ such that  the equation $\lambda u-Au=f$ is uniquely solvable in $D(A_1)$ for $f\in L^1(\R^N)$ and  $\lambda\in\Sigma$  and the resolvent estimate 
$$|\lambda|\|u\|_1\leq C\|\lambda u-Au\|_1$$ holds.
\end{theorem}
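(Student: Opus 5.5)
The plan is to approximate $f\in L^1(\R^N)$ by functions in $L^1\cap L^p$, solve the resolvent problem in $L^p$ by Theorem \ref{solv}, and pass to the limit in $L^1$ using the a priori estimates of Theorem \ref{sectorial} and Proposition \ref{domain-uno}. Uniqueness is immediate: if $\lambda u-Au=0$ with $u\in D(A_1)$, then Theorem \ref{sectorial} gives $|\lambda|\|u\|_1\le 0$, so $u=0$.

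\textbf{Sector.} We first fix the sector. Let $\Sigma$ be the sector from Theorem \ref{sectorial}, coming from the $C_0$ generation in Theorem \ref{generatcb}. Replace it by $\Sigma\cap\{\Rp\lambda>r_p\}$ for a fixed $p\in(1,\frac{N}{N-1})$ (any $p>1$ if $N=1$). This set still contains a sector of the form $\omega+\Sigma_{\pi/2+\theta'}$, which is what the generation theorem requires.

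\textbf{Existence.} Take $f\in L^1(\R^N)$ and $f_n\in C_c^\infty(\R^N)$ with $f_n\to f$ in $L^1$. Since $f_n\in L^p$, Theorem \ref{solv} gives a unique $u_n\in D(A_p)$ with $\lambda u_n-Au_n=f_n$. We must check that $u_n\in D(A_1)$, and this is the main obstacle: $u_n\in W^{4,p}$ with $Vu_n\in L^p$ does not obviously give integrability at infinity. I would try two routes.

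\emph{First route (duality).} Show that $u_n$ coincides with the $L^1$-dual resolvent. For $g\in C_c^\infty$, the $C_0$-resolvent gives $v=R(\lambda,A_\infty)g\in D(A_\infty)$. Then
\[
\Big|\int u_n g\Big|=\Big|\int f_n v\Big|\le \|f_n\|_1\,\|v\|_\infty\le \frac{C}{|\lambda|}\|f_n\|_1.
\]
This requires justifying the integration by parts between $u_n\in D(A_p)$ and $v\in D(A_\infty)$ via the cutoffs $\varphi_k$ of the density lemma. The commutator terms $D^j\varphi_k\, D^{4-j}$ vanish as $k\to\infty$, provided $D^hv$ is bounded for $h\le3$ (Proposition \ref{prop:infty}) and $D^hu_n\in L^p$; Hölder on the annulus $B_{2k}\setminus B_k$ then gives $k^{-j}k^{N(1-1/p)}$, which is harmful only for small $j$. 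If these annulus terms are problematic, use instead an exponential weight, or first take $f_n$ with compact support and prove exponential decay of $u_n$ by the standard Agmon/Combes–Thomas argument: conjugating $A$ by $e^{\epsilon\langle x\rangle}$ is a lower-order perturbation controlled by Corollary \ref{agmon-comp-2} for small $\epsilon$. Exponential decay puts $u_n$, $Vu_n$ (locally) and $\Delta^2u_n$ in $L^1$.

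\emph{Second route (bounded approximation).} Alternatively, use the approximation $V_\sigma$ from Theorem \ref{solv}. For bounded potentials, $-\Delta^2-V_\sigma$ is classically sectorial in $L^1$ with domain $W^{4,1}$-type, and the constants are uniform in $\sigma$ by the duality argument of Theorem \ref{sectorial} applied to $A_\sigma$.

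\textbf{Limit.} Once $u_n\in D(A_1)$, Theorem \ref{sectorial} and Proposition \ref{domain-uno}, applied to $u_n-u_m$, give
\[
|\lambda|\|u_n-u_m\|_1+\|\Delta^2(u_n-u_m)\|_1+\|V(u_n-u_m)\|_1\le C\|f_n-f_m\|_1 .
\]
Here Proposition \ref{domain-uno} is used with $A$ replaced by $A-\lambda$ and the term $\lambda u$ absorbed. Hence $u_n\to u$, $\Delta^2u_n\to w_1$ and $Vu_n\to w_2$ in $L^1$. Distributional limits identify $w_1=\Delta^2u$, and an a.e.\ subsequence identifies $w_2=Vu$. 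Therefore $u\in D(A_1)$ and $\lambda u-Au=f$. The resolvent estimate passes to the limit, which completes the proof.
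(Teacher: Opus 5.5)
Your overall strategy is sound and differs from the paper's. The gap is in the step you yourself flag as the main obstacle, $u_n\in D(A_1)$: the duality route as written does not close it. Even once the integration by parts is justified, the bound $|\int u_n g|\le C|\lambda|^{-1}\|f_n\|_1\|g\|_\infty$ only gives $u_n\in L^1$. It says nothing about $Vu_n$ or $\Delta^2u_n$ separately; only their sum $f_n-\lambda u_n$ is known to be integrable. Likewise, ``coinciding with the $L^1$-dual resolvent'' has no content until one knows that this dual maps $L^1$ into $D(A_1)$, which is the statement itself.

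The repair is short and makes the duality computation superfluous. Since $u_n\in W^{4,p}$ and $V\le c_0U$ is locally bounded, $\varphi_ku_n\in D(A_1)$. So Theorem~\ref{sectorial} and Proposition~\ref{domain-uno} apply to it, the latter via $\|Aw\|_1\le|\lambda|\|w\|_1+\|\lambda w-Aw\|_1$. Using $\lambda\varphi_ku_n-A(\varphi_ku_n)=\varphi_kf_n+[\Delta^2,\varphi_k]u_n$ and H\"older on the annulus, you get for $k\ge1$
\[
|\lambda|\,\|\varphi_k u_n\|_1+\|V\varphi_k u_n\|_1\le C\Big(\|f_n\|_1+\sum_{j=1}^{4}k^{-j}\|D^{4-j}u_n\|_{L^1(B_{2k}\setminus B_k)}\Big)\le C\Big(\|f_n\|_1+k^{N(1-\frac1p)-1}\|u_n\|_{W^{4,p}}\Big).
\]
Your choice $p<\frac{N}{N-1}$ means exactly $N(1-\frac1p)<1$, so every commutator term tends to $0$; none of them is \emph{harmful}, for any $j\ge1$. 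Fatou then gives $u_n,\,Vu_n\in L^1$, and the equation gives $\Delta^2u_n=f_n-\lambda u_n-Vu_n\in L^1$.

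The same identity is what gives $Vu_n\in L^1$ globally, not just locally, in your exponential-decay alternative. That route also works. However, it needs the usual bounded-weight regularization to identify $e^{\epsilon\langle x\rangle}u_n$ with the solution of the conjugated problem, and it is much heavier than necessary.

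With this repair your proof is complete and genuinely different from the paper's. The paper approximates the potential rather than the datum:
\begin{itemize}
\item it solves $\lambda u_\sigma-A_\sigma u_\sigma=f$ for $V_\sigma=V/(1+\sigma V)$ using Tanabe's $L^1$ theory for bounded potentials;
\item it transfers Theorem~\ref{sectorial} to $A_\sigma$ with $\sigma$-uniform constants and fills the half-plane by a Neumann-series argument;
\item it lets $\sigma\to0$ by compactness: the embedding of $\{u,\Delta^2u\in L^1\}$ into $W^{3,p}$ for $p<\frac{N}{N-1}$, a.e.\ convergence, and Fatou for $Vu$.
\end{itemize}
Your route trades compactness for completeness. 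The $D(A_1)$ a priori estimates make $(u_n)$ Cauchy in the graph norm, so the limit is identified for free. You need no $L^1$ theory for bounded potentials, only Theorem~\ref{solv} for one small $p>1$ plus the localization above. Both proofs use $p<\frac{N}{N-1}$, but for different reasons. Your \emph{second route} is the paper's, but as stated it is only a pointer: the passage $\sigma\to0$ is precisely the compactness step just described.

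Finally, $\Sigma\cap\{\Rp\lambda>r_p\}$ does not contain a sector $\omega+\Sigma_{\pi/2+\theta'}$, because such a sector reaches arbitrarily negative real parts. It only contains a right half-plane. This is harmless for the statement, and the half-plane resolvent estimate extends to a larger sector by the usual Neumann-series argument, as in the paper.
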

\begin{proof} 
Let $\sigma>0$ and define the function
$$V_\sigma(x)=\frac{V(x)}{1+\sigma V(x)},\quad x\in\R^N.$$ Observe that $V_\sigma(x)$ satisfies (\ref{gradient}) with  $\gamma$ independent of $\sigma$. Consider the operators with bounded coefficients $A_\sigma=-\Delta^2-V_\sigma(x)$. 
By \cite[Section 5.4]{tan}, there exist $\omega_{\sigma}\in\R$, $M_{\sigma}>0$ such that $\{\lambda\in\C:\ \Rp \lambda\geq \omega_{\sigma}\}\subset\rho(A_{\sigma})$ and 
\begin{equation*} %\label{stima-dipendente}
|\lambda|\|u\|_1\leq M_{\sigma}\|\lambda u-A_\sigma u\|_1
\end{equation*} for every $u\in D(A_{\sigma,1,\rm max})
=\{u\in L^1(\R^N)\,| Au\in L^1(\R^N) \}$ 
and $\lambda\in\C$ with $\Rp \lambda\geq \omega_{\sigma}$.
Moreover  if $u\in D(A_{\sigma,1,\rm max})$, since $V_\sigma$ is bounded we have that $V_\sigma u\in L^1(\R^N)$ and hence $\Delta^2u\in L^1(\R^N) $ so $u\in D(A_{\sigma,1})$  and then we can apply Theorem  \ref{sectorial}  to $A_\sigma$. Since the condition in (\ref{gradient}) is satisfied with a constant independent of $\sigma$, we have that there exists $r_1>0$  (independent of $\sigma$) such that  for every  $\lambda\in\C$ with $\Rp\lambda>r_1$ and for every $u\in D(A_{\sigma,1,\rm max})$
\begin{equation} \label{stima-ind1}
|\lambda|\|u\|_1\leq C_1\|\lambda u-A_\sigma u\|_1
\end{equation}
for some constant $C_1$ independent of $\sigma$.  Let 
$\overline{\lambda}\in \rho (A_{\sigma })$ and $\Rp\overline{\lambda}>r_1$, then the inequality (\ref{stima-ind1}) gives that
\begin{equation} \label{resolvent1}
\|R(\overline{\lambda},A_\sigma)\|_1\leq \frac{C_1}{|\overline{\lambda}|}\leq \frac{C_1}{r_1}.
\end{equation}
By classical results, if $|\lambda-\overline{\lambda}|<\frac{1}{\|R(\overline{\lambda},A_\sigma)\|_1}$, then  $\lambda\in\rho (A_{\sigma })$. By (\ref{resolvent1}), if
$|\lambda-\overline{\lambda}|<\frac{r_1}{C_1}$, then  $\lambda\in\rho (A_{\sigma })$.  
By covering with balls of radius $\frac{r_1}{C_1}$, it follows that, if $\Rp\lambda>r_1$, then $\lambda\in\rho(A_{\sigma })$.
In particular, $\lambda=\overline{\lambda}-\frac{\overline{\lambda}}{2C_1}$ satisfies $|\lambda-\overline{\lambda}|<\frac{|\overline{\lambda}|}{C_1}$ and then  belongs to $\rho (A_\sigma)$.
Let $\Rp\lambda>r_1$, $f\in L^1(\R^N)$. Denote by $u_\sigma\in D(A_{\sigma,1,\rm max})$ the unique solution of $\lambda u_\sigma-A_\sigma u_\sigma=f$.
Then, by the a priori estimates, 
\begin{align}\label{est-sigma}
|\lambda|\|u_\sigma\|_1\leq C\|f\|_1,\  \|V_\sigma u_\sigma\|_1\leq C\|f\|_1,\  
\end{align}
with a constant $C$ independent of $\sigma$ since $V_\sigma$ satisfies (\ref{gradient}) with a constant uniform with respect to $\sigma$. It follows that  $\|\Delta^2 u_\sigma\|_1\leq C\|f\|_1$. 
By \cite[Theorem 5.8 (i)]{tan}, $D(A_{\sigma,1,\rm max})$ continuously
embeds into $W^{3,p}(\R^N)$ for every $p\in \left[1,\frac{N}{N-1}\right[$. Therefore, 
\begin{equation}\label{emb}
\|u_\sigma\|_{W^{3,p}}\leq C_p(\|u_\sigma\|_1+\|\Delta^2 u_\sigma\|_1)\leq  C_p(r_1)\|f\|_1.
\end{equation}
From (\ref{emb}) we deduce that, up to considering a subsequence, we can assume that $u_\sigma$ converges weakly in $W^{3,p}(\R^N)$ to some function $u\in W^{3,p}(\R^N)$ as $\sigma$ goes to $0$ and up to a subsequences $u_\sigma$ converges to $u$ in $L^1_{\rm loc}(\R^N)$ and then $u_\sigma$ converges to $u$ almost everywhere. From (\ref{est-sigma}), we deduce that $u\in L^1(\R^N)$ and
 \begin{equation*} 
|\lambda|\|u\|_1\leq C_1\|f\|_1.
\end{equation*}
Moreover $\Delta^2u_\sigma\to\Delta^2 u$ in $L^1_{\rm loc}(\R^N)$ 
and therefore $\lambda u+\Delta^2u+Vu=f$ and $u$ belongs to $D(A_1)$.
\end{proof}

 By the previous result and classical generation theorems we get the following. 

 \begin{theorem}
Assume that $V$ satisfies (\ref{gradient}). Then, $(A,D_1(A))$ generates an analytic semigroup in $L^1(\R^N)$.
\end{theorem}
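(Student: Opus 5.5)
The statement is obtained by feeding the resolvent information of Theorem \ref{solv-1} into the classical generation theorem for analytic semigroups (\cite[Ch. II, Theorem 4.6]{eng-nag}). That theorem requires three ingredients: the operator is closed and densely defined, a suitable sector (or right half-plane) lies in $\rho(A)$, and the resolvent obeys $\|R(\lambda,A)\|\le C/|\lambda|$ there. The argument therefore consists of collecting these facts for $(A,D(A_1))$ in $L^1(\R^N)$.

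\emph{Density.} $C_c^\infty(\R^N)\subseteq D(A_1)$, and $C_c^\infty(\R^N)$ is dense in $L^1(\R^N)$, so $D(A_1)$ is dense.

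\emph{Closedness.} Let $u_n\to u$ and $Au_n\to g$ in $L^1$. By Proposition \ref{domain-uno}, $\{Vu_n\}$ and $\{\Delta^2u_n\}$ are Cauchy in $L^1$. The multiplication operator by $V$ is closed, and $\Delta^2$ is closed in the distributional sense. Passing to an a.e.-convergent subsequence gives $Vu_n\to Vu$ and $\Delta^2u_n\to\Delta^2u$ in $L^1$. Hence $u\in D(A_1)$ and $Au=g$.

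\emph{Resolvent set and resolvent bound.} Theorem \ref{solv-1} gives, for $\lambda\in\Sigma$, existence and uniqueness of a solution $u\in D(A_1)$ of $\lambda u-Au=f$ together with $|\lambda|\|u\|_1\le C\|f\|_1$. Thus $\Sigma\subseteq\rho(A)$ and $\|R(\lambda,A)\|_{1}\le C/|\lambda|$ on $\Sigma$. In that proof the set $\Sigma$ is (or can be taken as) a half-plane $\{\Rp\lambda>r_1\}$. Once one knows $\{\Rp\lambda>\omega\}\subseteq\rho(A)$ with $\|\lambda R(\lambda,A)\|\le C$, the standard Neumann-series argument extends the resolvent to a sector $\omega+\Sigma_{\pi/2+\delta}$ with $\delta>0$, keeping a uniform bound. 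The theorem then says exactly that $A-\omega$ is sectorial, and hence that $A$ generates an analytic semigroup.

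The main point to check is the shape of $\Sigma$. The sectoriality estimate of Theorem \ref{sectorial} was transferred by duality from $C_0(\R^N)$, where Theorem \ref{generatcb} supplies a sector of angle greater than $\pi/2$. We must therefore make sure that $\Sigma$ contains a half-plane $\Rp\lambda>r_1$ (or a shifted sector of opening greater than $\pi/2$) on which the bound is uniform. I would argue this directly from the proof of Theorem \ref{solv-1}, which works on $\Rp\lambda>r_1$ with constants independent of $\sigma$, and then enlarge the half-plane to a sector by perturbation.
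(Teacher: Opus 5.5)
Your proposal is correct and follows the same route as the paper. The paper's proof is the one-line combination of Theorem \ref{solv-1} (unique solvability for $\Rp\lambda>r_1$ with the uniform bound $|\lambda|\|u\|_1\le C\|f\|_1$) with the classical generation theorem; your checks of density, closedness and the half-plane-to-sector extension are exactly the details the paper leaves implicit.
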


%\bibliographystyle{amsplain}
%\bibstyle{plain}

%\bibliography{bibfile}
\bibliographystyle{elsarticle-num} 
\bibliography{bibfile}

\begin{thebibliography}{10}
\expandafter\ifx\csname url\endcsname\relax
  \def\url#1{\texttt{#1}}\fi
\expandafter\ifx\csname urlprefix\endcsname\relax\def\urlprefix{URL }\fi
\expandafter\ifx\csname href\endcsname\relax
  \def\href#1#2{#2} \def\path#1{#1}\fi

\bibitem{agrt}
D.~Addona, F.~Gregorio, A.~Rhandi, C.~Tacelli, Bi-{K}olmogorov type operators and weighted {R}ellich's inequalities, Nonlinear Differ. Equ. Appl. 29~(2) (2022) Paper No. 13, 37 pp.
\newblock \href {https://doi.org/https://doi.org/10.1007/s00030-021-00747-y} {\path{doi:https://doi.org/10.1007/s00030-021-00747-y}}.

\bibitem{gre-tac}
F.~Gregorio, C.~Tacelli, Fourth-order {S}chrödinger type operator with unbounded coefficients in ${L}^2(\mathbb{R}^{N})$, Math Meth Appl Sci. 45 (2022) 11246–11261.

\bibitem{gre-spi-tac1}
F.~Gregorio, C.~Spina, C.~Tacelli, Fourth-order operators with unbounded coefficients, Communications on Pure \& Applied Analysis 23 (2024) 463--479.

\bibitem{gre-spi-tac2}
F.~Gregorio, C.~Spina, C.~Tacelli, Fourth-order operators with unbounded coefficients in ${L}^1$ spaces, Mathematical models for interacting dynamics on networks, 375–388. Trends Math. Birkhäuser/Springer, Cham, (2026).
\newblock \href {https://doi.org/https://doi.org/10.1007/978-3-032-02326-1$\_$14} {\path{doi:https://doi.org/10.1007/978-3-032-02326-1$\_$14}}.

\bibitem{dav-hin98b}
E.~B. Davies, A.~M. Hinz, {K}ato class potentials for higher order elliptic operators, J. London Math. Soc. (2) 58~(3) (1998) 669–678.

\bibitem{gre-spi-tac3}
F.~Gregorio, C.~Spina, C.~Tacelli, Higher order {S}chrödinger operators, preprint, arXiv:2604.25547 (2026).

\bibitem{kato86}
T.~Kato, ${L}^p$-theory of {S}chr{\"o}dinger operators with a singular potential, North-Holland Math. Stud., 122 Notas Mat., North-Holland Publishing Co., Amsterdam 108 (1986) 63–78.

\bibitem{oka84}
N.~Okazawa, An ${L}^p$ theory for {S}chr{\"o}dinger operators with nonnegative potentials, J. Math. Soc. Japan. 36~(4) (1984) 675--688.
\newblock \href {https://doi.org/doi:10.2969/jmsj/03640675} {\path{doi:doi:10.2969/jmsj/03640675}}.

\bibitem{met-pru-rha-sch}
G.~Metafune, J.~Prüss, R.~Schnaubelt, A.~Rhandi, ${L}^p$-regularity for elliptic operators with unbounded coefficients, Adv. Differential Equations 10~(10) (2005) 1131–1164.

\bibitem{aus-ben}
P.~Auscher, B.~{Ben Ali}, Maximal inequalities and {R}iesz transform estimates on ${L}^p$ spaces for {S}chr{\"o}dinger operators with nonnegative potentials, Ann. Inst. Fourier 57 (2007) 1975--2013.
\newblock \href {https://doi.org/10.5802/aif.2320} {\path{doi:10.5802/aif.2320}}.

\bibitem{she}
Z.~Shen, ${L}^p$ estimates for {S}chr{\"o}dinger operators with certain potentials, Annales de l'Institut Fourier 45 (1995) 513--546.
\newblock \href {https://doi.org/10.5802/aif.1463} {\path{doi:10.5802/aif.1463}}.

\bibitem{can-ves}
P.~Cannarsa, V.~Vespri, Generation of analytic semigroups in the ${L}^p$ topology by elliptic operators in $\mathbb{R}^n$, Israel J. Math. 61~(3) (1988) 235–255.

\bibitem{tan}
H.~Tanabe, Functional analytic methods for partial differential equations, M. Dekker, 1997.

\bibitem{cup-for}
G.~Cupini, S.~Fornaro, Maximal regularity in ${L}^p(\mathbb{R}^{N})$ for a class of elliptic operators with unbounded coefficients, Diff. Int. Eqs. 17 (2004) 259--296.

\bibitem{lun}
A.~Lunardi, Analytic semigroups and optimal regularity in parabolic problems, Birkhauser, 1995.

\bibitem{agm62}
S.~Agmon, On the eigenfunctions and on the eigenvalues of general elliptic boundary value problems., Comm. Pure Appl. Math. 15 (1962) 119–147.

\bibitem{eng-nag}
K.~Engel, R.~Nagel, One parameter semigroups for linear evolutions equations, Springer-Verlag, Berlin, 2000.

\bibitem{bro-opi}
R.~C. Brown, B.~Opic, {E}mbeddings of weighted {S}obolev spaces into spaces of continuous functions, Proc. Roy. Soc. London Ser. A 439 (1992) 279–296.

\end{thebibliography}

\end{document}